\newtheorem{thm}{Theorem}
\newtheorem{lem}{Lemma}
\newtheorem{rem}{Remark}
\newtheorem{ex}{Example}
\newcommand{\R}{\mathbb{R}}
\newcommand{\N}{\mathbb{N}}
\newcommand{\eps}{\varepsilon}
\title[Learning Polynomial Representations of Physical Objects]{Learning Polynomial Representations of Physical Objects with Application to Certifying Correct Packing Configurations}
\author{%
	\Name{Morgan Jones} \Email{morgan.jones@sheffield.ac.uk}\\
	\addr The Department of Automatic Control and Systems Engineering,
	The University of Sheffield
}
\begin{document}
	\maketitle
	
	\begin{abstract}%
		This paper introduces a novel approach for learning polynomial representations of physical objects. Given a point cloud data set associated with a physical object, we solve a one-class classification problem to bound the data points by a polynomial sublevel set while harnessing Sum-of-Squares (SOS) programming to enforce prior shape knowledge constraints. By representing objects as polynomial sublevel sets we further show it is possible to construct a secondary SOS program to certify whether objects are packed correctly, that is object boundaries do not overlap and are inside some container set. While not employing reinforcement learning (RL) in this work, our proposed secondary SOS program does provide a potential surrogate reward function for RL algorithms, autonomously rewarding agents that propose object rotations and translations that correctly pack objects within a given container set.



		
	\end{abstract}
	
	\begin{keywords}%
Sum-of-Squares Programming, The Packing Problem, Surface Reconstruction.
	\end{keywords}
		 \vspace{-0.25cm}
	\section{Introduction}
		 \vspace{-0.1cm}
	Emerging Machine Learning (ML) techniques have the potential to cause substantial impacts on society. However, deployment of these methods in critical decision-making processes may be hindered by concerns about their reliability, safety, and correctness~\cite{kwiatkowska2023trust,tambon2022certify}. In this paper we propose a general framework of autonomously certifying the correctness of ML outputs involving objects belonging to physical space. Our approach begins by representing physical objects as polynomial sublevel sets. Subsequently, this allows us to formulate the certification that a given object configuration satisfies certain properties as a Sum-of-Squares (SOS) optimization problem. More specifically, we show how this framework allows us to certify feasible solutions to the packing problem, the problem of arranging objects in a non-overlapping configuration within a container set.

	Methods that seek to solve the packing problem are of significant importance to E-commerce, or electronic commerce. E-commerce has radically transformed the way we shop and do business. Many companies now use e-commerce by incorporating technologies like online payment systems and virtual marketplaces into their platforms. One such important problem arising from E-commerce is the problem of reducing package waste. Many products purchased online are shipped individually in boxes that are often much larger than the products themselves, filled with packing materials such as bubble wrap, Styrofoam, or air pillows, which are not biodegradable and are not easily recyclable~\cite{chueamuangphan2020packaging}. It is estimated that online shopping generates 4.8 times more packaging
	waste than offline shopping~\cite{KIM2022110398} and more than 900 million packages are ordered online each year in the UK~\cite{allen2017analysis}. Package waste causes substantial environmental damage by filling up landfills and increasing carbon emissions through their production, disposal and transportation~\cite{wang2020you}. Amazon has investigated the use of ML to predict the optimal container material for specific objects~\cite{meiyappan2021position}. Once a container and packing material is selected we must then solve the packing problem, correctly arranging a set of objects within the container.
	
	
	 For general objects and container sets, solving the packing problem is a formidable challenge. In niche special cases, such as fitting eleven or less 2D circles into a larger circular container set, analytical optimal configurations are known~\cite{huang2011global}. However, even in other simple cases where the problem is limited to two dimensions, all objects are identical squares, and the container is a convex shape, it is widely recognized that the problem is NP-Complete~\cite{fowler1981optimal}. Most existing techniques focus on solving the packing problem are restricted to simple shaped objects (rectangles, ellipsoids, etc)~\cite{fang2023deep,tu2023deep,yang2021packerbot,que2023solving,xiong2023towards,hopper1999genetic,yang2023heuristics}. There are far fewer existing methods that seek to pack general shapes, such as those found in~\cite{pan2023sdf,zhao2023learning,zhao2021optimizing,liu2023optimizing,huang2022planning}. All of these methods are specialized to two or three dimensions and require some sort of discretization to ensure objects do not overlap and the packing configuration is correct. The method proposed in this paper works for arbitrary dimensions, requires no discretization and compliments existing packing methods by providing a way to autonomously and mathematically certify that a packing configuration is correct. By expressing objects and container sets as polynomial sublevel sets, we show that there exists an associated SOS program capable of certifying the correctness of a given packing configuration.
	 
To implement our proposed SOS packing certification program we must first find polynomial sublevel representations of physical objects and containers. In this work we find these polynomial representations by reconstructing a surface of an object/container from point cloud data. Surface reconstruction plays a pivotal role in various fields, including virtual reality~\cite{sorokin20233d}, 3D printing~\cite{sheng2018lightweight}, robotics~\cite{nigro2023assembly}, and medical imaging~\cite{bernard2017shape}. There is an extensive literature on non-polynomial surface reconstruction methods~\cite{mishra2023convolutional,wu2023category,ma2022surface,huang2022surface,bruel2020topology}. There is much less existing work on polynomial surface reconstruction methods. Many polynomial set estimation methods revolve around approximating semialgebraic sets and their associated Minkowski sums and Pontryagin differences, see~\cite{dabbene2017simple,cotorruelo2022sum,guthrie2022inner,guthrie2022closed,marschner2021sum,habibi2023polynomial,jones2019using}. To the best of the author's knowledge, the only existing methods for polynomial surface reconstruction based on point cloud data are~\cite{magnani2005tractable,ahmadi2016geometry,jones2023sublevel}. Of these methods \cite{magnani2005tractable,ahmadi2016geometry} are based on heuristic objectives while \cite{jones2023sublevel} has convergence guarantees. In this work, we modify the method in \cite{jones2023sublevel} to allow for prior knowledge shape SOS constraints, such as symmetry's, star connectedness and convexity. The approach of enforcing shape constraints shares some similarity to~\cite{pitarch2019systematic} that solved regression problems with SOS model constraints and  \cite{ahmadi2020learning,machado2023sparse} that solved system identification problems while enforcing model ``side information" or prior model knowledge SOS constraints.

	 \vspace{-0.35cm}
	 \paragraph{Notation:} For $x \in \R^n$ we denote the Euclidean norm as $||x||_2:=\sqrt{\sum_{i=1}^n x_i^2}$. We denote the $n$-dimensional ball of radius $r>0$ and centred at $y \in \R^n$ by $B_r(0)=\{x \in \R^n: ||x-y||_2^2<r^2 \}$. We denote the space of polynomials $p: \R^n \to \R$ by $\R[x]$ and polynomials with degree at most $d \in \N$ by $\R_d[x]$. Similarly, we denote  $\R^{s \times n}[x]$ to be the set of matrix polynomials $p: \R^n \to \R^{s \times n}$. We say $p \in \R_{2d}[x]$ is Sum-of-Squares (SOS) if there exists $p_i \in \R_{d}[x]$ such that $p(x) = \sum_{i=1}^{k} (p_i(x))^2$. We denote $\sum_{SOS}^{2d}$ to be the set of SOS polynomials of at most degree $d \in \N$ and the set of all SOS polynomials as $\sum_{SOS}$. We use the symbol $ \exists $ to mean ``There exists".

 \section{Learning Polynomial Representations of Physical Objects}


In this work we assume that we have access to a point cloud data set, a collection of data points $\{x_i\}_{i=1}^N \subset \R^n$, providing coordinates and position of the surface of the object/container. Such point clouds can be generated through various methods, such as 3D scanning technologies like LiDAR (Light Detection and Ranging) or photogrammetry. Apart from point cloud data, we also assume that we have prior knowledge about the shape of the objects/container sets such as symmetries, convexity and connectedness. 

The work of~\cite{jones2023sublevel} proposed a method that computes outer polynomial sublevel sets of discrete points by approximating the function $V(x)= 1- \mathds{1}_{\{x_i\}_{i=1}^N}(x)$. We next adapt the SOS program from~\cite{jones2023sublevel} by adding prior knowledge shape constraints, 
\begin{align} \label{opt: SOS discrete points}
	J_d^* \in & \arg \sup_{J_d \in \R_d[x]}  \int_\Lambda J_d(x) dx \qquad \text{ such that }\\ \nonumber
	& J_d(x_i) <0  \text{ for } 1 \le i \le N, \\ \nonumber 
	&  1-J_d(x) - s_0(x)(R^2 -||x||_2^2) \in \sum_{SOS}^d, \quad s_0(x) \in \sum_{SOS}^d,\\ \nonumber
	& J \in \mathcal{S}_i \text{ for all } 1 \le i \le M,
\end{align}
where $\Lambda=[a_1, b_1] \times \dots \times [a_n,b_n]$ is chosen as a rectangular region. This choice enables us to analytically pre-calculate the integral of the monomial basis functions of $J_d$ over $\Lambda$. As a result, the objective function in Opt.~\eqref{opt: SOS discrete points} reduces to a simple weighted linear sum of coefficients of $J_d$.
%
%
Moreover, we select $\Lambda$ such that $\{x_i\}_{i=1}^N \subset  \Lambda$ and select $R>0$ sufficiently large so that $\Lambda \subseteq B_R(0)$ . Finally, $\mathcal{S}_i$ are sets of functions that satisfy certain SOS constraints relating to prior shape knowledge. In particular, we consider the following prior shape knowledge constraints:
\begin{align} \label{prior knowledge: symetry}
	&	\text{Symmetry: } \mathcal{S}=\{J_d \in \R[x]: J(x)=J(Ax) \}, \text{ defined for some } A\in \R^{n \times n} . \\ \label{prior knowledge: star}
	&	\text{Star Connectedness: } \mathcal{S}=\{J_d \in \R[x]: x \nabla J_d(x) >0  \}. \\ \label{prior knowledge: convexity}
	&	 \text{Convexity: } \mathcal{S}=\{J_d \in \R[x]: \nabla^2 J_d(x) >0  \}.
\end{align}
To enforce the prior shape knowledge constraints given in Eqs~\eqref{prior knowledge: symetry} and \eqref{prior knowledge: star} we tighten the polynomial inequalities to SOS constraints while using Putinar's Positivstellesatz (Thm.~\ref{thm: Psatz}) to enforce these constraints locally over the ball of radius $R>0$. To enforce Eq.~\eqref{prior knowledge: convexity} we tighten the inequality to an SOS matrix inequality, that is we follow~\cite{ahmadi2013complete} and constrain $\nabla^2 J_d(x)=M(x)^\top M(x)$ for some $M(x) \in \R^{s \times n}[x]$ and $s\in \N$.
\begin{lem}
	Suppose $J_d^*$ solves Opt.~\eqref{opt: SOS discrete points}. Denote $X_d:=\{x \in \Lambda : J_d(x) < 0 \}$. Then, 
	\begin{enumerate}
		\item $\{x_i\}_{i=1}^N \subset X_d$.
		\item If $\mathcal{S}=\emptyset$ then $X_d \to \{x_i\}_{i=1}^N$ in the volume metric.
		\item If Eq.~\eqref{prior knowledge: symetry} is enforced then if $x \in X_d$ it follows $Ax \in X_d$.
		\item If Eq.~\eqref{prior knowledge: star} is enforced then if $x \in X_d$ it follows $\lambda x \in X_d$ for all $\lambda \in (0,1]$.
		\item If Eq.~\eqref{prior knowledge: convexity} is enforced then if $x,y \in X_d$ it follows $\alpha x + (1-\alpha)y \in X_d$ for all $\alpha \in [0,1]$.
	\end{enumerate}
\end{lem}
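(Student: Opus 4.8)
The plan is to treat the five claims separately, since they rely on quite different mechanisms: Claims 1, 3, 4 and 5 are direct pointwise consequences of the feasibility constraints of Opt.~\eqref{opt: SOS discrete points}, whereas Claim 2 is an asymptotic statement that I expect to reduce entirely to the convergence guarantee of \cite{jones2023sublevel}. For Claim 1 I would simply observe that feasibility of $J_d^*$ forces $J_d^*(x_i) < 0$ for every $1 \le i \le N$, and since we have assumed $\{x_i\}_{i=1}^N \subset \Lambda$, each $x_i$ satisfies both defining conditions of $X_d$, giving the inclusion at once.

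For Claims 3--5 the common strategy is to take a point $x$ (or a pair $x,y$) in $X_d$, apply the relevant shape constraint to control the sign of $J_d^*$ at the transformed point, and then verify that this transformed point still lies in $\Lambda$ so that it qualifies as an element of $X_d$. For the symmetry case I would use the enforced identity $J_d^*(x) = J_d^*(Ax)$, so that $x \in X_d$ yields $J_d^*(Ax) = J_d^*(x) < 0$. For Claim 4 I would study the scalar function $g(\lambda) := J_d^*(\lambda x)$ on $(0,1]$, compute $g'(\lambda) = \frac{1}{\lambda}(\lambda x)^\top \nabla J_d^*(\lambda x)$, and invoke the star constraint $(\lambda x)^\top \nabla J_d^*(\lambda x) > 0$ (valid since $\lambda x \in B_R(0)$) to conclude $g'(\lambda) > 0$; hence $g$ is increasing on $(0,1]$ and $J_d^*(\lambda x) \le g(1) = J_d^*(x) < 0$. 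For Claim 5 I would use that $\nabla^2 J_d^* \succ 0$ makes $J_d^*$ convex, so $J_d^*(\alpha x + (1-\alpha)y) \le \alpha J_d^*(x) + (1-\alpha) J_d^*(y) < 0$.

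The first obstacle is that, because each shape constraint is only enforced \emph{locally} over $B_R(0)$ via its SOS tightening, I must carefully verify domain membership at each step. Claim 5 is the cleanest: since $\Lambda$ is a box it is convex, so the segment $\alpha x + (1-\alpha)y$ stays in $\Lambda$ for free. Claim 4 requires $\lambda x \in \Lambda \subseteq B_R(0)$, which holds provided $\Lambda$ is star-shaped about the origin (true for a box containing $0$), and Claim 3 requires $Ax \in \Lambda$. I expect these to force a mild standing assumption on $\Lambda$ (for instance $0 \in \Lambda$, or $A$ mapping $\Lambda$ into itself) that should be made explicit; without it the transformed point could leave $\Lambda$ even though its $J_d^*$-value is negative.

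The main obstacle is Claim 2, which is not an algebraic identity but a convergence statement in the volume metric. Here I would set $\mathcal{S} = \emptyset$ so that Opt.~\eqref{opt: SOS discrete points} collapses exactly to the program of \cite{jones2023sublevel}, and then invoke its established convergence theorem. The underlying intuition is that maximizing $\int_\Lambda J_d^*(x)\,dx$ while pinning $J_d^*(x_i) < 0$ pushes $J_d^*$ to be positive over as much of $\Lambda$ as possible, forcing the sublevel set $X_d$ to shrink toward the finite data set; since $\{x_i\}_{i=1}^N$ has Lebesgue measure zero, the volume of the symmetric difference between $X_d$ and $\{x_i\}_{i=1}^N$ must tend to zero as $d \to \infty$. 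Turning this intuition into a rigorous limiting argument is the delicate part, and I would handle it by direct appeal to the convergence result of \cite{jones2023sublevel} rather than reproving it, after checking that removing the shape constraints leaves that result applicable verbatim.
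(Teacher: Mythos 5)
Your proposal is correct and follows the same case-by-case structure as the paper's proof; claims 1, 2 and 3 are handled identically (claim 2 by direct appeal to the convergence theorem of \cite{jones2023sublevel}, which is exactly what the paper does). The only difference is that for claims 4 and 5 you write out the underlying arguments explicitly --- the monotonicity of $\lambda \mapsto J_d^*(\lambda x)$ via the sign of $(\lambda x)^\top \nabla J_d^*(\lambda x)$, and convexity of $J_d^*$ implying convexity of its sublevel sets --- whereas the paper simply cites Lemma~9 of \cite{wang2005polynomial} and Prop.~1.2.6 of \cite{bertsekas2003convex} for these two facts; your versions are the standard proofs of those cited results, so nothing of substance changes. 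One point in your favour: your insistence on checking that the transformed point ($Ax$, $\lambda x$, or $\alpha x + (1-\alpha)y$) remains in $\Lambda$ is a genuine subtlety that the paper's proof silently skips. Since $X_d$ is defined as a subset of $\Lambda$, claims 3 and 4 do implicitly require $A\Lambda \subseteq \Lambda$ and $0 \in \Lambda$ respectively (claim 5 is safe because the box $\Lambda$ is convex), and making that standing assumption explicit, as you suggest, would strengthen the lemma's statement rather than weaken your proof.
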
 \label{lem: sublevel set approx propoerties}
\begin{proof}
	Since $J_d(x_i)<0$ for $1 \le i \le N$ is enforced in Opt.~\eqref{opt: SOS discrete points} it trivially follows $\{x_i\}_{i=1}^N \subset X_d$. When $\mathcal{S}=\emptyset$ it follows by~\cite{jones2023sublevel} that $X_d \to \{x_i\}_{i=1}^N$. If Eq.~\eqref{prior knowledge: symetry} is enforced then if $x \in X_d$ it follows $J_d(Ax)=J_d(x)<0$ implying $Ax \in X_d$. If Eq.~\eqref{prior knowledge: star} is enforced then if $x \in X_d$ it follows $\lambda x \in X_d$ for all $\lambda \in (0,1]$ follows from Lemma 9 in~\cite{wang2005polynomial}. Finally it follows by Prop 1.2.6 from~\cite{bertsekas2003convex} that if Eq.~\eqref{prior knowledge: convexity} holds then $J_d^*$ is convex and hence its sublevel sets are convex. 
\end{proof}
\begin{rem}
	In \cite{jones2023sublevel} it was shown that when $\mathcal{S}_i=\emptyset$ the integral objective of Opt.~\eqref{opt: SOS discrete points} ensures that this bound is tight. That is, without prior knowledge shape constraints $\{x \in \Lambda: J_d^*(x) \le 0 \} \to \{x_i\}_{i=1}^N$ in the volume metric. By adding prior knowledge shape constraints we will loose this asymptotically optimal bound on the point cloud. For instance, the set of discrete points $\{x_i\}_{i=1}^N$ is never convex for $N \ge 2$, yet we may constrain the sublevel set of $J^*_d$ to be convex, hence in this scenario it is clear that $\{x \in \Lambda: J_d^*(x) \le 0 \} \not\to \{x_i\}_{i=1}^N$.
	However, in practice our goal is to approximate the physical object/container set and not the point cloud.  The addition of prior knowledge shape constraints is useful in achieving this goal. 
\end{rem}
	 
	We have seen how to reconstruct polynomial sublevel set representations, $\{x \in \Lambda: J_d^*(x) \le 0 \}$, of objects by solving Opt.~\eqref{opt: SOS discrete points}. The rectangular set, $\Lambda$, can be expressed as a semialgebraic set but it is hard to express it as a single polynomial sublevel set. However, since $\Lambda$ is selected sufficiently large to contain point cloud data, typically there exists $r>0$ such that $B_r(0) \subset \Lambda$ and
	\begin{equation} \label{eq: obj poly sublevel set rep}
		\{x \in \Lambda: J_d^*(x) \le 0 \}=\{x \in B_r(0) : J_d^*(x) \le 0\}
	\end{equation}
Therefore, to simply notation in the proceeding section we will assume objects take the form given in Eq.~\eqref{eq: obj poly sublevel set rep}, that is objects are represented by $\{x \in \R^n: J(x) \le 0, F(x) \le 0\}$, for some $J,F \in \R[x]$. 
	 \vspace{-0.3cm}
	 \section{A Set Formulation of The Packing Problem}
 Consider a container, $C \subset \R^n$, and object sets to be packed, $P_i \subset \R^n$, then  the packing problem can be formulated as the following feasibility problem,
	\vspace{-0.2cm} \begin{align} \label{opt: general packing}
		& \text{Find }_{T_i \in E(n)}\\ \nonumber 
		& \text{Subject to: } T_i(P_i) \subset C \text{ for } i \in \{1,\dots,N\} \text{ and } T_i(P_i) \cap T_j(P_j) = \emptyset \text{ for all } i \ne j.
	\end{align}\vspace{-0.6cm}\text{ }
	
	\noindent
	Where $E(n)$ denotes the euclidean group containing all rigid transformations that preserve distance and for $T \in E(n)$, and for each $P \subset \R^n$ we define the transformation $T(P):=\{T(p): p \in P \} \subset \R^n$. In matrix and vector notation, the transformation $x \to Tx$ for some $x \in \R^n$ and $T \in E(n)$ is equivalent to $x \to A(x +v)$ where $A \in \R^{n \times n}$ is some orthogonal matrix ($A^\top A=AA^\top=I$) and $v \in \R^n$ is a finite magnitude vector.
	
	 To solve the Optimization Problem~\eqref{opt: general packing} we must find transformations, $T_i \in E(n)$, of how we rotate, translate and reflect each object, $P_i$, in order for the object to be placed in the container, that is $T_i(P_i) \subset C$, and in a way that respects the volume boundaries of each objects, that is objects must not overlap when placed  $T_i(P_i) \cap T_j(P_j) = \emptyset$.
	
	We should remark that Optimization Problem~\eqref{opt: general packing} can easily be extended to the more challenging case where both the container, $C \subset \R^n$, and the objects, $P_i \subset \R^n$, are decision variables. That is we must choose the maximum number of objects that can be placed in the container with minimum volume. However, Optimization Problem~\eqref{opt: general packing} is extremely challenging to solve. Even when the rigid transformations, objects and container sets are all given and fixed it is still difficult to certify whether or not the given solution is feasible. In the following section we will show that if we model the object and container sets as polynomial sublevel sets then we can certify that a given packing configuration is correct using convex optimization. 
	
	\vspace{-0.3cm}
\subsection{Formulating The Packing Problem as a Polynomial Optimization Problem}
\vspace{-0.1cm}
Finding the optimal solution for the packing problem can require significant computational resources. 
The constraints in Opt.~\eqref{opt: general packing} involve sets, which are mathematical collections of
uncountable elements. This poses a challenge as it is difficult to
search over these uncountable elements and ensure object sets do not overlap and are within the container set. To make such
set optimization problems tractable, we need to find ways to
parametrize these sets. The approach we take to
overcome this problem is to consider sets given by polynomial sublevel sets. This has the following advantages:
\vspace{-0.1cm}
\begin{enumerate}
	\item Polynomials sublevel sets are parametrized by a finite vector of monomial coefficients that can be efficiently searched over and stored in computer memory. 
	\vspace{-0.2cm}
	\item We can represent any compact container and object set to arbitrary accuracy by a polynomial sublevel set. See Lemma~\ref{lem: any compact set can be approx by poly} in the Appendix. 
	\vspace{-0.2cm}
	\item Given point cloud data of objects and containers we can compute approximate polynomial representations by solving Opt.~\eqref{opt: SOS discrete points}.
\end{enumerate}
\vspace{-0.2cm}
By expressing the objects and container as polynomial sublevel sets in the following way $P_i := \{x \in \mathbb{R}^n : p_i(x) \le 0, F_i(x) \le 0 \}$ and $C := \{x \in \mathbb{R}^n : c(x) < 0, F_0(x) < 0 \}$, where the sublevel set of $F_i \in \mathbb{R}[x]$ is some computational domain such as $B_r(0) \subset \Lambda$ from Opt.~\ref{opt: SOS discrete points}, we can reframe Opt.~\eqref{opt: general packing} in the following way,
\vspace{-0.2cm} \begin{align} \label{opt: poly ineq}
	& \text{Find }_{T_i \in E(n)}\\ \nonumber 
	& \text{Subject to: }   c(x) < 0 \text{ for all } x \in \{ y \in \R^n :F_i(T_i^{-1}y) \le 0, p_i(T_i^{-1}y ) \le 0\} \text{ and } i \in \{1,\dots,N\},  \\ \nonumber 
	& \hspace{0.5cm} F_0(x) < 0 \text{ for all } x \in \{ y \in \R^n :F_i(T_i^{-1}y) \le 0, p_i(T_i^{-1}y ) \le 0\} \text{ and } i \in \{1,\dots,N\},  \\ \nonumber 
	& \hspace{0.5cm} p_j(T_j^{-1}x ) > 0 \text{ for all } x \in \{y \in \R^n: F_i(T_i^{-1}y) \le 0, p_i(T_i^{-1}y) \le 0,F_j(T_j^{-1}y) \le 0 \} \text{ and } i \ne j,
\end{align}
where we define the inverse of a Euclidean group transformation in the following way: If $T_i \in E(n)$ then there exists an orthogonal matrix $A_i \in \R^{n \times n}$ and translation vector $v_i \in \R^n$ such that $T_i(P_i)=\{A_i(x+v_i):x \in P_i\}=\{x\in\R^n: F_i(A_i^\top(x-v_i)) \le 0, p_i(A_i^\top(x-v_i)) \le 0   \} =: \{ x \in \R^n :F_i(T_i^{-1}x) \le 0, p_i(T_i^{-1}x ) \le 0\}$.
\begin{lem} \label{lem: equiv set opt and poly ineq}
Consider the sets $P_i:=\{x \in \R^n: p_i(x) \le 0,F_i(x) \le 0 \}$ and $C:=\{x \in \R^n: c(x) < 0, F_0(x)<0 \}$. Then Opt.~\eqref{opt: general packing} is equivalent to Opt.~\eqref{opt: poly ineq}.
\end{lem}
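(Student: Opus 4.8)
The plan is to prove the equivalence by unfolding the set-theoretic constraints of Opt.~\eqref{opt: general packing} into the pointwise polynomial inequalities of Opt.~\eqref{opt: poly ineq}, one constraint family at a time. Since both problems share the same decision variables $T_i \in E(n)$ and the same feasible domain, it suffices to show that a tuple $(T_1,\dots,T_N)$ satisfies the constraints of one problem if and only if it satisfies those of the other. The starting point is the polynomial representation of the transformed object recorded just before the statement, namely $T_i(P_i) = \{x \in \R^n : F_i(T_i^{-1}x) \le 0,\, p_i(T_i^{-1}x) \le 0\}$, which rewrites each rigid body in the same sublevel-set form as $P_i$ itself.

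First I would handle the containment constraint $T_i(P_i) \subset C$. Expanding the definition of set inclusion, $T_i(P_i) \subset C$ holds if and only if every $x$ with $F_i(T_i^{-1}x) \le 0$ and $p_i(T_i^{-1}x) \le 0$ lies in $C = \{x : c(x) < 0,\, F_0(x) < 0\}$, i.e.\ satisfies both $c(x) < 0$ and $F_0(x) < 0$. Splitting this conjunction yields exactly the first two constraint families of Opt.~\eqref{opt: poly ineq}, quantified over the set $\{y : F_i(T_i^{-1}y) \le 0,\, p_i(T_i^{-1}y) \le 0\}$, which is precisely $T_i(P_i)$. This step is a direct translation of ``$\subset$'' into a universally quantified pointwise statement.

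The more delicate step is the non-overlap constraint $T_i(P_i) \cap T_j(P_j) = \emptyset$, which I would prove by contraposition. A point $x$ lies in the intersection exactly when all four inequalities $F_i(T_i^{-1}x) \le 0$, $p_i(T_i^{-1}x) \le 0$, $F_j(T_j^{-1}x) \le 0$, $p_j(T_j^{-1}x) \le 0$ hold simultaneously. Hence the intersection is empty if and only if no point satisfying the first three of these can also satisfy $p_j(T_j^{-1}x) \le 0$; equivalently, $p_j(T_j^{-1}x) > 0$ for every $x$ in the set $\{y : F_i(T_i^{-1}y) \le 0,\, p_i(T_i^{-1}y) \le 0,\, F_j(T_j^{-1}y) \le 0\}$, which is precisely the third constraint of Opt.~\eqref{opt: poly ineq}. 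The point requiring care here is the inclusion of the computational-domain inequality $F_j(T_j^{-1}y) \le 0$ inside the quantifier's set: it is exactly this term that restricts the strict positivity requirement to the region where object $j$ is actually defined, so that the pointwise constraint matches emptiness of the intersection in \emph{both} directions rather than being strictly stronger. I expect this bookkeeping of which inequalities belong inside the quantified set versus which are asserted on it to be the only real subtlety; everything else is a routine unwinding of definitions.
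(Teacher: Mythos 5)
Your proposal is correct and follows essentially the same route as the paper's proof: a direct unfolding of the containment constraint into the first two pointwise families, and a negation/contraposition argument showing that emptiness of $T_i(P_i)\cap T_j(P_j)$ is equivalent to strict positivity of $p_j(T_j^{-1}\cdot)$ over the set cut out by the remaining three inequalities. The bookkeeping point you flag about which inequalities sit inside the quantified set is exactly the step the paper's displayed equivalence~\eqref{contra} makes explicit.
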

\begin{proof} 
	To show the optimization problems are equivalent we show that the constraints are equivalent. We start with the constraint that the object is contained within the container set.
	\begin{align*}
	&	T_i(P_i) \subset C \iff \text{If } x \in T_i(P_i)  \text{ then } x \in C  \iff x \in C \text{ for all } x \in T_i(P_i)\\
	& \iff F_0(x)<0 \text{ and } c(x)<0 \text{ for all } x \in \{ y \in \R^n :F_i(T_i^{-1} y) \le 0, p_i(T_i^{-1}y ) \le 0\}. 
	\end{align*}
	Therefore $	T_i(P_i) \subset C$ for $i \in \{1,\dots,N\}$ holds iff the first two constraints of Opt.~\eqref{opt: poly ineq} hold.
	
	We next show the equivalence of the non-overlapping object constraint by proof by contrapositive. Note that the contrapositive to $T_i(P_i) \cap T_j(P_j) = \emptyset$ is $T_i(P_i) \cap T_j(P_j)  \ne \emptyset$. Now,
	\begin{align}
	&	T_i(P_i) \cap T_j(P_j) \ne \emptyset \\ \nonumber
		& \iff  \exists  x \in \{y \in \R^n: F_i(T_i^{-1}y)\le0, p_i(T_i^{-1}y) \le 0,F_j(T_j^{-1}y) \le 0, p_j(T_j^{-1}y) \le 0 \}\\
		& \iff  \label{contra}
	 \exists x \in \{y \in \R^n: F_i(T_i^{-1}y) \le 0, p_i(T_i^{-1}y) \le 0,F_j(T_j^{-1}y) \le 0 \} \text{ and } p_j(T_j^{-1}x) \le 0.
	\end{align}
	The contrapositive of Eq.~\eqref{contra}  is exactly the third constraint of Opt.~\eqref{opt: poly ineq}. Since the contrapositive of $T_i(P_i) \cap T_j(P_j) = \emptyset$ is equal to the contrapositive of the third constraint of Opt.~\eqref{opt: poly ineq}, it follows that $T_i(P_i) \cap T_j(P_j) = \emptyset$ is equivalent to the third constraint of Opt.~\eqref{opt: poly ineq}.
	\end{proof}
\vspace{-0.5cm}
\subsection{A Sum-of-Squares Tightening for Certifying Correct Packing Configurations}	
%
For a given configuration, fixed $\{T_i\}_{i=1}^N \subset E(n)$, we next tighten  Opt.~\eqref{opt: poly ineq} to an SOS problem,
\vspace{-0.2cm} \begin{align} \label{opt: SOS config certification}
	&\gamma^* := \arg \max_{\gamma \in \R, s_i^{(1)},s_i^{(2)},s_i^{(3)},s_i^{(4)},s_i^{(5)},s_i^{(6)},s_i^{(7)},s_i^{(8)},s_i^{(9)},s_i^{(10)} \in \sum_{SOS}^d} \gamma \\ \nonumber 
	& \text{Subject to: }\\ \nonumber 
	& \quad -c(x)  +p_i(T_i^{-1}x )s_i^{(1)}(x) +F_i(T_i^{-1}x) s_i^{(2)}(x) = s_i^{(3)}(x) +\gamma  \text{ for all }  i \in \{1,...,N\},  \\ \nonumber 
		& \quad -F_0(x)  +p_i(T_i^{-1}x )s_i^{(4)}(x) +F_i(T_i^{-1}x) s_i^{(5)}(x) = s_i^{(6)}(x) +\gamma  \text{ for all }  i \in \{1,...,N\},  \\ \nonumber 
	& \quad p_j(T_j^{-1}x ) + p_i(T_i^{-1}x) s_i^{(7)}(x)+ F_i(T_i^{-1}x)s_i^{(8)}(x)+ F_j(T_j^{-1}x) s_i^{(9)}(x)= s_i^{(10)}(x) + \gamma \text{ for all }  i \ne j.
\end{align}

Opt.~\eqref{opt: SOS config certification} is a feasibility SOS program. Rather than certifying all the constraints simultaneously we can decompose the problem to certify each of the constraints one at a time (or in parallel): 
\begin{align} \label{opt: SOS c1}
	&\gamma^{(1)}_{i} := \arg \max_{\gamma \in \R, s_1,s_2,s_3 \in \sum_{SOS}^d} \gamma \\ \nonumber 
	& \text{Subject to: }-c(x)  +p_i(T_i^{-1}x )s_1(x) +F_i(T_i^{-1}x) s_2(x) = s_3(x) +\gamma  \\ \label{opt: SOS c2}
	&\gamma^{(2)}_{i} := \arg \max_{\gamma \in \R, s_1,s_2,s_3 \in \sum_{SOS}^d} \gamma \\ \nonumber 
	& \text{Subject to: } -F_0(x)  +p_i(T_i^{-1}x )s_1(x) +F_i(T_i^{-1}x) s_2(x) = s_{3}(x) +\gamma \\ \label{opt: non_overlap}
	&\gamma^{(3)}_{i,j} := \arg \max_{\gamma \in \R, s_1,s_2,s_3,s_4 \in \sum_{SOS}^d} \gamma \\ \nonumber 
	& \text{Subject to: } p_j(T_j^{-1}x ) + p_i(T_i^{-1}x) s_1(x)+ F_i(T_i^{-1}x)s_2(x)+ F_j(T_j^{-1}x) s_3(x)= s_4(x) + \gamma.
\end{align}

\begin{thm} \label{thm: cert correct packing}
Suppose $P_i:=\{x \in \R^n: p_i(x) \le 0,F_i(x) \le 0 \}$ and $C:=\{x \in \R^n: c(x) < 0, F_0(x)<0 \}$, where the sublevel set $\{x \in \R^n: F_i(x) \le 0 \}$ is compact for each $i \in \{1,\dots,N\}$. Then, $\{T_i\}_{i=1}^N \subset E(n)$ is a feasible solution to Opt.~\eqref{opt: general packing} if and only if there exists $d \in \N$ such that the solutions of Opts~\eqref{opt: SOS c1}, \eqref{opt: SOS c2} and~\eqref{opt: non_overlap} are positive, that is $\gamma^{(1)}_i>0$ for $i \in \{1,\dots,N\}$, $\gamma^{(2)}_i>0$ for $i \in \{1,\dots,N\}$ and $\gamma^{(3)}_{i,j}>0$ for $i \ne j$ and $i,j \in \{1,\dots,N\}$.
\end{thm}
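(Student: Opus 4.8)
The plan is to route everything through Lemma~\ref{lem: equiv set opt and poly ineq}, which already establishes that the fixed configuration $\{T_i\}_{i=1}^N$ solves Opt.~\eqref{opt: general packing} if and only if the three families of strict polynomial inequalities in Opt.~\eqref{opt: poly ineq} hold. Hence it suffices to show that these inequalities hold if and only if, for some $d \in \N$, the three SOS programs Opts.~\eqref{opt: SOS c1}, \eqref{opt: SOS c2} and \eqref{opt: non_overlap} attain strictly positive optimal values. I would treat the two implications separately: the implication from positive optima to the inequalities is a soundness argument, while the converse is a completeness argument relying on Putinar's Positivstellensatz (Thm.~\ref{thm: Psatz}).

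For the soundness direction, suppose $\gamma^{(1)}_i, \gamma^{(2)}_i, \gamma^{(3)}_{i,j} > 0$ and fix the optimal SOS multipliers. On the set $K_i := \{x : F_i(T_i^{-1}x) \le 0,\ p_i(T_i^{-1}x) \le 0\}$ the products $p_i(T_i^{-1}x)s_1(x)$ and $F_i(T_i^{-1}x)s_2(x)$ are nonpositive, since each $s_k$ is SOS hence nonnegative; rearranging the equality constraint of Opt.~\eqref{opt: SOS c1} gives $-c(x) = s_3(x) + \gamma^{(1)}_i - p_i(T_i^{-1}x)s_1(x) - F_i(T_i^{-1}x)s_2(x) \ge \gamma^{(1)}_i > 0$ on $K_i$, which is exactly the first constraint of Opt.~\eqref{opt: poly ineq}. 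The identical sign-chase applied to Opts.~\eqref{opt: SOS c2} and \eqref{opt: non_overlap} yields $F_0(x) < 0$ on $K_i$ and $p_j(T_j^{-1}x) > 0$ on the triple-intersection set, so all constraints of Opt.~\eqref{opt: poly ineq} hold and Lemma~\ref{lem: equiv set opt and poly ineq} closes this direction.

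For the completeness direction, assume $\{T_i\}_{i=1}^N$ is feasible, so by Lemma~\ref{lem: equiv set opt and poly ineq} the inequalities of Opt.~\eqref{opt: poly ineq} hold. I would first note that each relevant constraint set is compact: $\{x : F_i(T_i^{-1}x) \le 0\}$ is the rigid image $T_i(\{F_i \le 0\})$ of a compact set, and the sets appearing in all three constraints are closed subsets of such images. On the compact set $K_i$ the continuous function $-c$ is strictly positive and therefore attains a minimum $m_i > 0$, so $-c - m_i/2$ is strictly positive on $K_i$. Applying Putinar's Positivstellensatz to $-c - m_i/2$ over the description $\{-p_i(T_i^{-1}x) \ge 0,\ -F_i(T_i^{-1}x) \ge 0\}$ of $K_i$ produces SOS multipliers $\sigma_0, \sigma_1, \sigma_2$ with $-c - m_i/2 = \sigma_0 - \sigma_1 p_i(T_i^{-1}x) - \sigma_2 F_i(T_i^{-1}x)$, which rearranges to the constraint of Opt.~\eqref{opt: SOS c1} with $s_3 = \sigma_0$ and $\gamma = m_i/2 > 0$. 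Taking $d$ to be the maximum degree of all multipliers produced this way across the three problems and all index pairs makes every program feasible at its positive $\gamma$, so each optimum is strictly positive. The same argument, driven by the positive minima of $-F_0$ and of $p_j(T_j^{-1}\cdot)$, handles Opts.~\eqref{opt: SOS c2} and \eqref{opt: non_overlap}.

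The step I expect to be the main obstacle is verifying the Archimedean hypothesis required by Putinar's Positivstellensatz, since mere compactness of $K_i$ does not by itself make the quadratic module generated by $-p_i(T_i^{-1}x)$ and $-F_i(T_i^{-1}x)$ Archimedean. This is where the modelling assumption that $\{F_i \le 0\}$ is the computational ball $B_r(0)$ is essential: then $-F_i(T_i^{-1}x) = r^2 - \|x - v_i\|_2^2$ is a genuine ball constraint, which directly furnishes the Archimedean property. If one insists only on abstract compactness of $\{F_i \le 0\}$, I would instead append the redundant constraint $\rho^2 - \|x\|_2^2 \ge 0$ for $\rho$ large enough that $K_i \subset B_\rho(0)$, either by absorbing it into $F_i$ or by introducing an extra SOS multiplier, so as to restore the Archimedean condition before invoking the Positivstellensatz.
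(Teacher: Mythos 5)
Your proof is correct and follows essentially the same route as the paper: Lemma~\ref{lem: equiv set opt and poly ineq} reduces everything to the strict polynomial inequalities of Opt.~\eqref{opt: poly ineq}, the soundness direction is the sign-chase showing the SOS identities are a tightening, and the completeness direction uses the extreme value theorem to peel off a positive margin before invoking Putinar's Positivstellensatz. Your closing worry about the Archimedean hypothesis is already absorbed by the paper's statement of Thm.~\ref{thm: Psatz}, which only requires that a single constraint set $\{g_i \ge 0\}$ be compact (here $\{x : F_i(T_i^{-1}x) \le 0\}$, a rigid image of the compact set $\{F_i \le 0\}$), so no redundant ball constraint needs to be appended.
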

\begin{proof}
The direction that if there exists $d \in \N$ such that the solutions of Opts~\eqref{opt: SOS c1}, \eqref{opt: SOS c2} and~\eqref{opt: non_overlap} are positive implies $\{T_i\}_{i=1}^N \subset E(n)$ is a feasible solution to Opt.~\eqref{opt: general packing} follows trivially since Opts~\eqref{opt: SOS c1}, \eqref{opt: SOS c2} and~\eqref{opt: non_overlap} are tightening of Opt.~\eqref{opt: poly ineq} and thus $\{T_i\}_{i=1}^N \subset E(n)$ is feasible for Opt.~\eqref{opt: poly ineq}. Lemma~\ref{lem: equiv set opt and poly ineq} then shows that $\{T_i\}_{i=1}^N \subset E(n)$ is feasible for Opt.~\eqref{opt: general packing}.
	
	Let us now prove the other direction. Suppose $\{T_i\}_{i=1}^N \subset E(n)$ is a feasible solution to Opt.~\eqref{opt: general packing}. Then by Lemma~\ref{lem: equiv set opt and poly ineq} $\{T_i\}_{i=1}^N \subset E(n)$ is a feasible solution to Opt.~\eqref{opt: poly ineq}. The constraints of Opt.~\eqref{opt: poly ineq} consist of polynomial inequalities, all being of the form: $f(x)>0$ for all $x \in X$, where $f \in \R[x]$ and {$X=\{x \in \R^n: g_1(x) \ge 0,\dots,g_m(x) \ge 0\}$} is a compact semialgebraic set (note, Euclidean transformations are distance preserving so if $P_i$ is compact then so is $T_i(P_i)$). For simplicity we next only show $\gamma^{(1)}_i>0$, since all the solutions to other instances of Opts~\eqref{opt: SOS c1}, \eqref{opt: SOS c2} and~\eqref{opt: non_overlap} can be shown to be positive by a similar argument. For this case,  $f(x)=-c(x)$, $g_1(x)=-p_i(T_i^{-1}x)$ and $g_2(x)=-F_i(T_i^{-1}x)$. Because $X$ is compact and $f$ is continuous, the extreme value theorem implies that there exists $x^* \in X$ where the minimum of $f$ over $X$ is attained, that is $\min_{x \in X}f(x)=f(x^*)$. Let $\eta:=0.1f(x^*)$. Since $f$ is strictly positive over $X$ and $x^* \in X$ it follows that $\eta=f(x^*)>0$. Now $f(x)-\eta=f(x)-0.1f(x^*) \ge  \min_{x\in X}\{f(x)\}-0.1f(x^*)=0.9 f(x^*)>0$ for all $x \in X$. By Putinar's Positivstellesatz (Thm.~\ref{thm: Psatz}) it follows there exists $\{s_i\}_{i=1}^m$ such that $	f(x) - \sum_{i=1}^m s_i(x) g_i(x) =s_{m+1}(x) + \eta$. Therefore for $d:= \max\{ \deg(s_i) \}$ it follows that $\eta>0$ is feasible to Opt.~\eqref{opt: SOS c1}. Since  $\gamma_i^{(1)}$ is defined as the optimal solution to Opt.~\eqref{opt: SOS c1} and we maximize over $\gamma$ it follows that $\gamma_i^{(1)} \ge \eta >0$.
	\end{proof}

Note, in this section we considered objects of the form $P_i:=\{x \in \R^n: p_i(x) \le 0,F_i(x) \le 0 \}$ because our approximate polynomial sublevel set representations found by solving Opt.~\eqref{opt: SOS discrete points} took this form (see Eq.~\eqref{eq: obj poly sublevel set rep}). This methodology can trivially be extended to the case where both objects and containers are described by general semialgebriac sets rather than the intersection of only two polynomial sublevel sets. Also note, in the simpler case of certifying correct packing when $P_i:=\{x \in \R^n: p_i(x) \le 0 \}$ and $C=\{x\in \R^n:c(x) \le 0 \}$ we simply set $F_i=0$ in Opts~\eqref{opt: SOS c1} and \eqref{opt: non_overlap} and no longer need to solve Opt.~\eqref{opt: SOS c2}.

\begin{rem}
	Theorem~\ref{thm: cert correct packing} shows that if the container and object sets are described by compact polynomial sublevel sets then there exists SOS programs capable of certifying the correctness of a given configuration.  This implies that correct packing configurations can be validated with polynomial time complexity, following from the fact that SOS programs can be reformulates as SDP programs that can be solved to arbitrary accuracy in polynomial time complexity~\cite{laurent2005semidefinite}. 
\end{rem}

Theorem~\ref{thm: cert correct packing} establishes that certifying the correctness of a packing configuration can be achieved by demonstrating positive solutions for Opts~\eqref{opt: SOS c1}, \eqref{opt: SOS c2}, and~\eqref{opt: non_overlap}. In numerical investigations, it was noticed that incorporating the constraint $\gamma \le 1$ into each optimization problem, preventing solutions from reaching excessively large values, enhances the performance of the SDP solver. While this artificial constraint does impact our ability to certify the correctness of a packing configuration, as certification merely necessitates positive solutions to Opts~\eqref{opt: SOS c1}, \eqref{opt: SOS c2}, and~\eqref{opt: non_overlap}, and positivity is independent of magnitude. Consequently, in the subsequent section, we will include the constraint $\gamma \le 1$ when certifying the correctness of packing configurations.
	
 \vspace{-0.2cm}
\section{Numerical Experiments}
\vspace{-0.2cm}
In this section we solve Opt.~\eqref{opt: SOS discrete points} for various point clouds and prior information to learn the shapes of several objects. We then arrange these objects into different configurations and solve Opts~\eqref{opt: SOS c1}, \eqref{opt: SOS c2} and~\eqref{opt: non_overlap} to certify the correctness of each packing configuration.  All SOS programs are
solved using Yalmip~\cite{lofberg2004yalmip} with SDP solver Mosek~\cite{aps2019mosek}.
\begin{ex}[Learning a crisp packet] \label{ex: learn crisp}
	Figure~\ref{fig:crisp} shows the process of converting an image of a packet of crisps into a polynomial sublevel set. Fig.~\ref{subfig:pic Crisp_plot} shows the original image of the crisps captured by a standard mobile phone camera. The point cloud associated with this image is then depicted in Fig.~\ref{subfig:pic pointcloud_crisp}, where the point cloud is generated by randomly sampling discrete points for positions of pixels that are not coloured white. Figs~\ref{subfig:pic crisp_poly},\ref{subfig:pic crisp_poly_sym},\ref{subfig:pic crisp_poly_star} and~\ref{subfig:pic crisp_poly_convex} show the 0-sublevel sets of the $d=18$, {$R=1.66$ and $\Lambda=[-1.1,1.1]^2$} solution to Opt.~\eqref{opt: SOS discrete points} with $\mathcal{S}=\emptyset$, $\mathcal{S}$ given in Eq.~\eqref{prior knowledge: symetry} and $A=-I$, $\mathcal{S}$ given in Eq.~\eqref{prior knowledge: star} and $\mathcal{S}$ given in Eq.~\eqref{prior knowledge: convexity} respectively.   The sublevel sets in Figs~\ref{subfig:pic crisp_poly} and~\ref{subfig:pic crisp_poly_sym} are connected but not simply connected since they possess holes. However, as expected, the sublevel set in Fig.~\ref{subfig:pic crisp_poly_sym} is symmetrical about the lines $x_2=x_1$ and $x_2=-x_1$. On the other hand, both Figs~\ref{subfig:pic crisp_poly_star} and~\ref{subfig:pic crisp_poly_convex} are simply connected, possessing no holes. However, Fig.~\ref{subfig:pic crisp_poly_star} is star connected and not convex whereas Fig.~\ref{subfig:pic crisp_poly_convex} is convex as expected. Interestingly, the original image is represented by a $ 1600   \times  1252   \times 3$  matrix, whose elements provide positions and colours of each pixel, whereas, the polynomial sublevel set representation is stored using only a length $190$ vector representing the coefficients of the degree $18$ polynomial. We have effectively compressed the information stored in the image into $190$ numbers that allows us to easily model what happens when we rotate and shift the object.
	
	\begin{figure}[htbp] 
		\floatconts
		{fig:crisp}
		{\vspace{-25pt} \caption{ Polynomial representations of a crisp packet associated with Ex.~\ref{ex: learn crisp}. Fig.~\ref{subfig:pic Crisp_plot} shows crisp image. Fig.~\ref{subfig:pic pointcloud_crisp} shows point cloud derived from pixel data. Figs~\ref{subfig:pic crisp_poly},\ref{subfig:pic crisp_poly_sym},\ref{subfig:pic crisp_poly_star} and~\ref{subfig:pic crisp_poly_convex} show the 0-sublevel sets of the $d=18$ solution to Opt.~\eqref{opt: SOS discrete points} with $\mathcal{S}=\emptyset$, $\mathcal{S}$ given in Eq.~\eqref{prior knowledge: symetry} and $A=-I$, $\mathcal{S}$ given in Eq.~\eqref{prior knowledge: star} and $\mathcal{S}$ given in Eq.~\eqref{prior knowledge: convexity} respectively.    \vspace{-30pt}}}  
		{%
			\subfigure{%
				\label{subfig:pic Crisp_plot}
				\includegraphics[width=0.25 \linewidth, trim = {3cm 0cm 3cm 0cm}, clip]{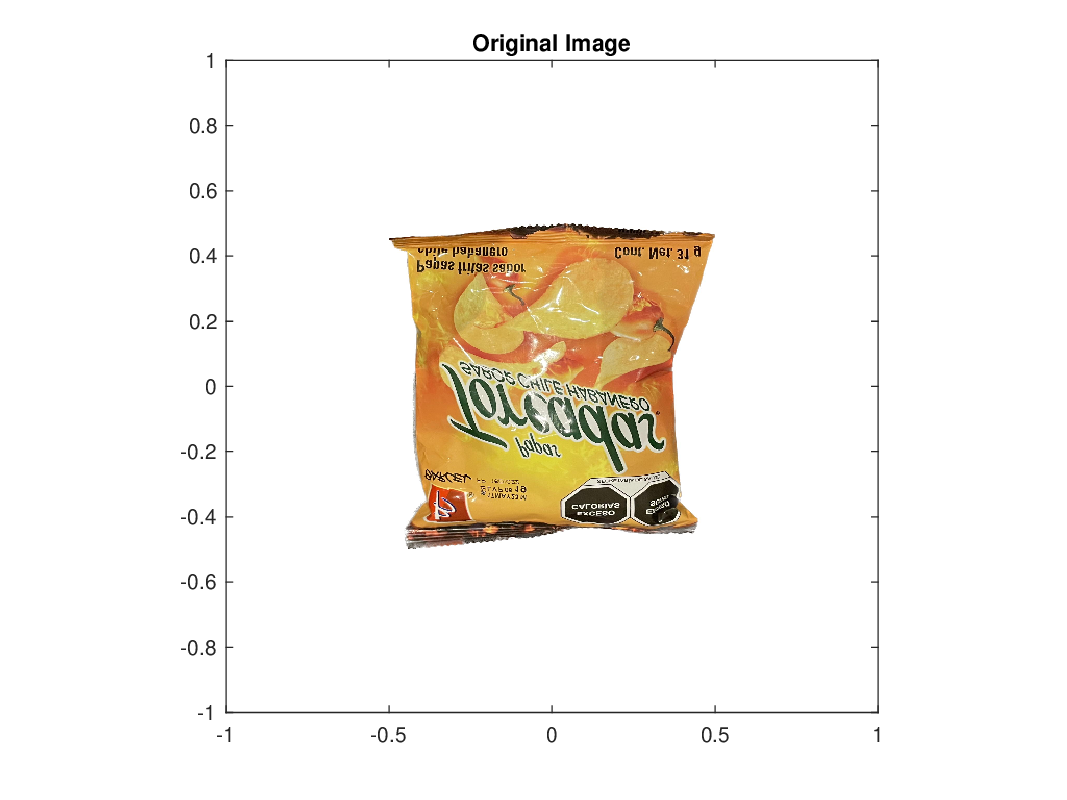}
			} 
			\subfigure{%
				\label{subfig:pic pointcloud_crisp}
				\includegraphics[width=0.25 \linewidth, trim = {3cm 0cm 3cm 0cm}, clip]{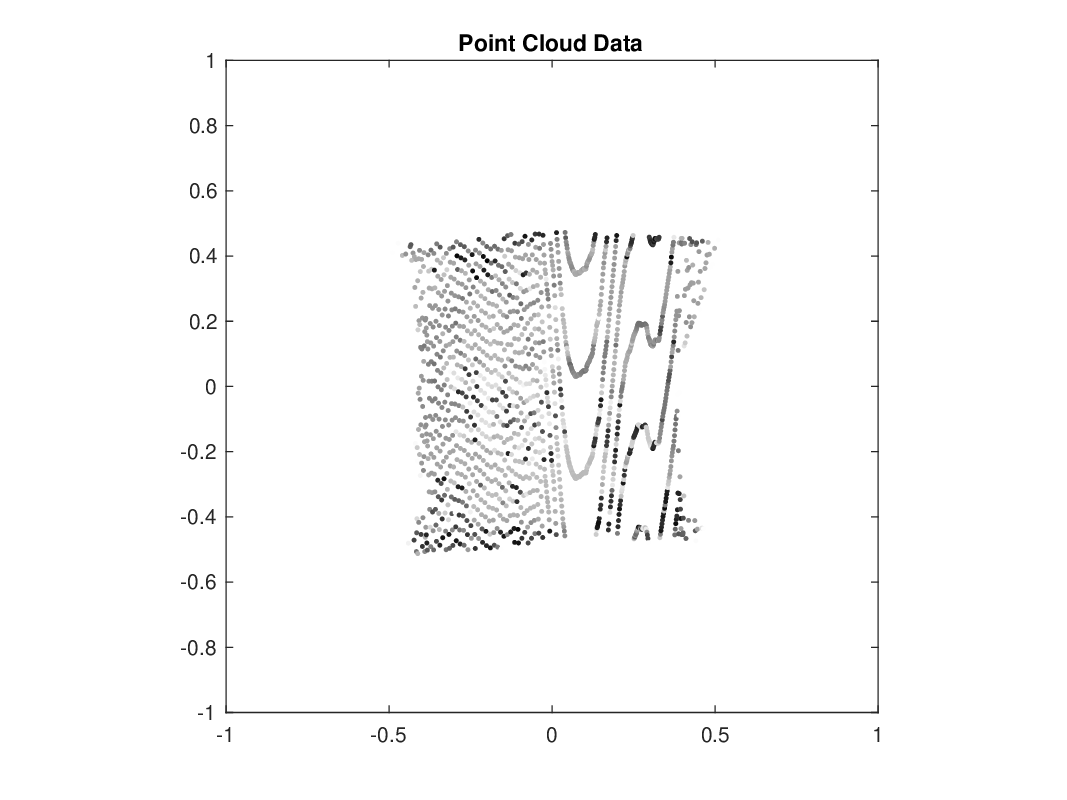}
			}
				\subfigure{%
			\label{subfig:pic crisp_poly}
			\includegraphics[width=0.25 \linewidth, trim = {3cm 0cm 3cm 0cm}, clip]{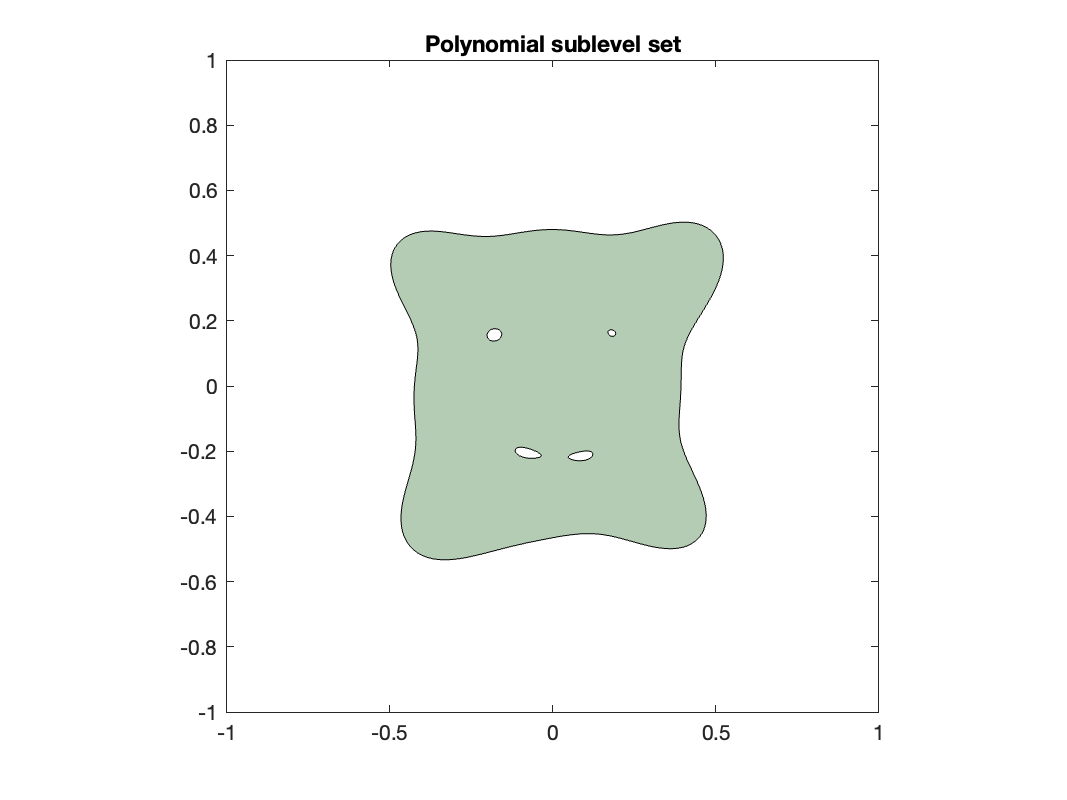}
		}
			\subfigure{%
		\label{subfig:pic crisp_poly_sym}
		\includegraphics[width=0.25 \linewidth, trim = {3cm 0cm 3cm 0cm}, clip]{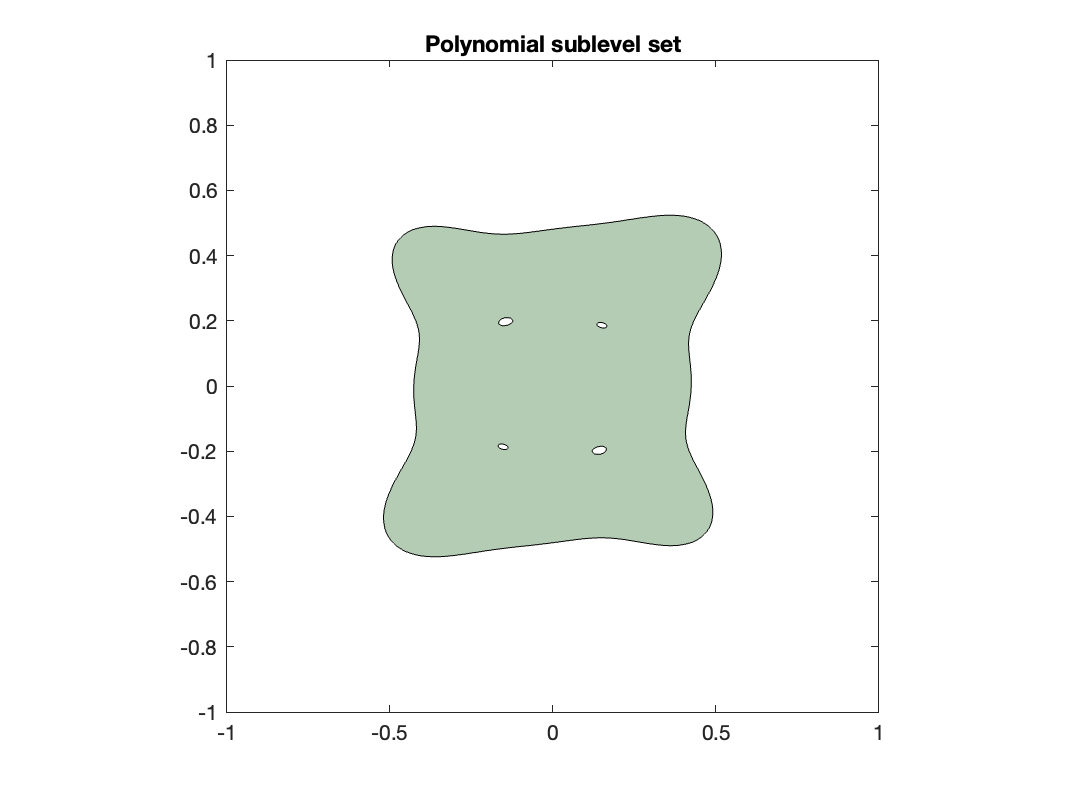}
	}
			\subfigure{%
	\label{subfig:pic crisp_poly_star}
	\includegraphics[width=0.25 \linewidth, trim = {3cm 0cm 3cm 0cm}, clip]{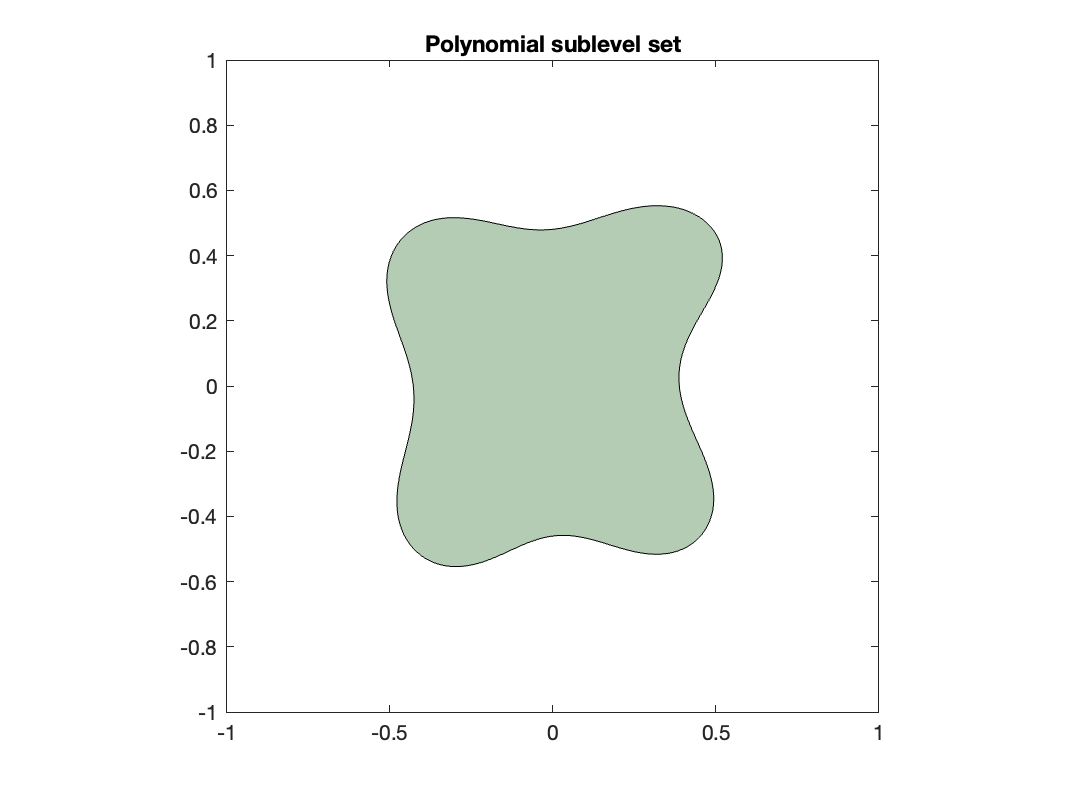}
}
			\subfigure{%
	\label{subfig:pic crisp_poly_convex}
	\includegraphics[width=0.25 \linewidth, trim = {3cm 0cm 3cm 0cm}, clip]{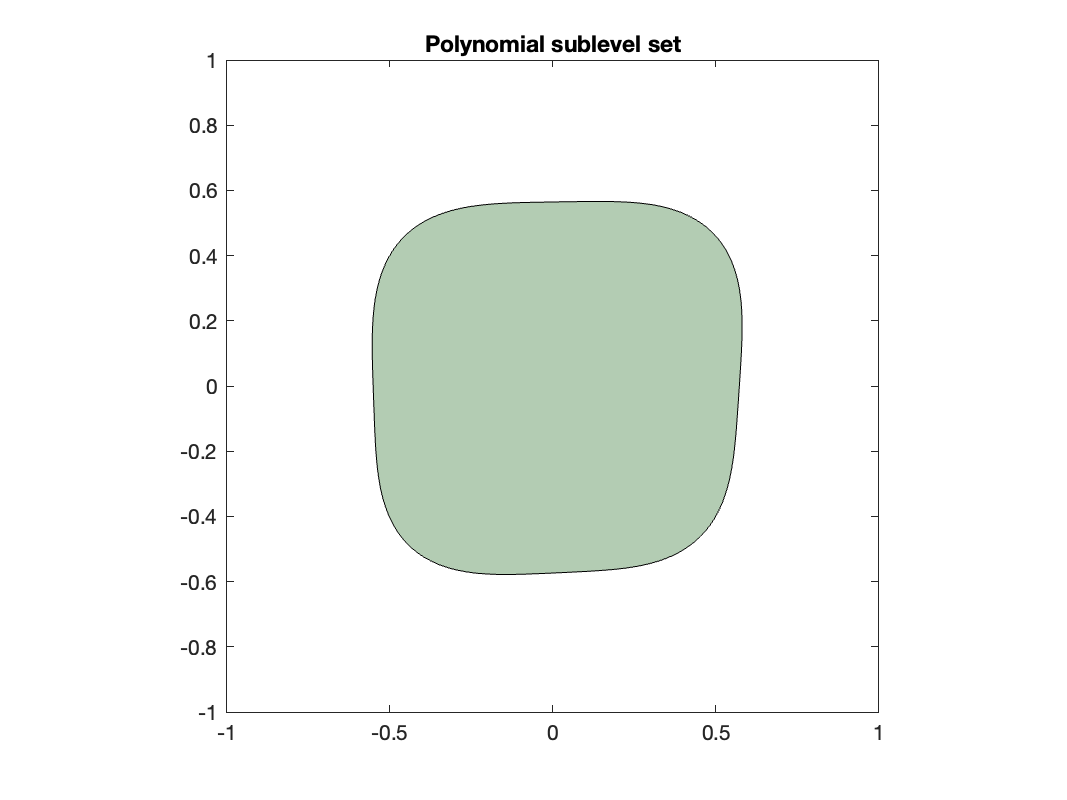}
}
		}
	\end{figure}

\end{ex}

\begin{ex}[Learning a teapot] \label{ex: teapot}
	Figure~\ref{fig:teapot} shows various polynomial representations of the Newell teapot. The point cloud data displayed in Fig.~\ref{subfig: teapot_pointcloud} was generated using Matlab's function \\ \texttt{teapotGeometry}. The approximate teapot sets in Figs~\ref{subfig: teapot_poly} and~\ref{subfig:teapot_poly_star} are the sublevel sets of solutions to Opt.~\ref{opt: SOS discrete points} for $d=20$, $\Lambda=[-1.2,1.2]^3$ and $R=2.18$ with the Fig.~\ref{subfig: teapot_poly} having $\mathcal{S}=\emptyset$ and Fig.~\ref{subfig:teapot_poly_star} having $\mathcal{S}$ given in Eq.~\eqref{prior knowledge: star}.
	
	\begin{figure}[htbp]
	\floatconts
{fig:teapot}
	{ \vspace{-25pt} \caption{  Polynomial representations of a teapot associated with Example~\ref{ex: teapot}. Fig.~\ref{subfig: teapot_pointcloud} shows shows point cloud data set of a teapot. Figs~\ref{subfig: teapot_poly} and~\ref{subfig:teapot_poly_star} shows 0-sublevel set of the $d=20$ solution to Opt.~\eqref{opt: SOS discrete points} with $\mathcal{S}=\emptyset$ and $\mathcal{S}$ given in Eq.~\eqref{prior knowledge: star} respectively. \vspace{-25pt} }}
	{%
		\subfigure{%
			\label{subfig: teapot_pointcloud}
			\includegraphics[width=0.3 \linewidth, trim = {1cm 0cm 1cm 0cm}, clip]{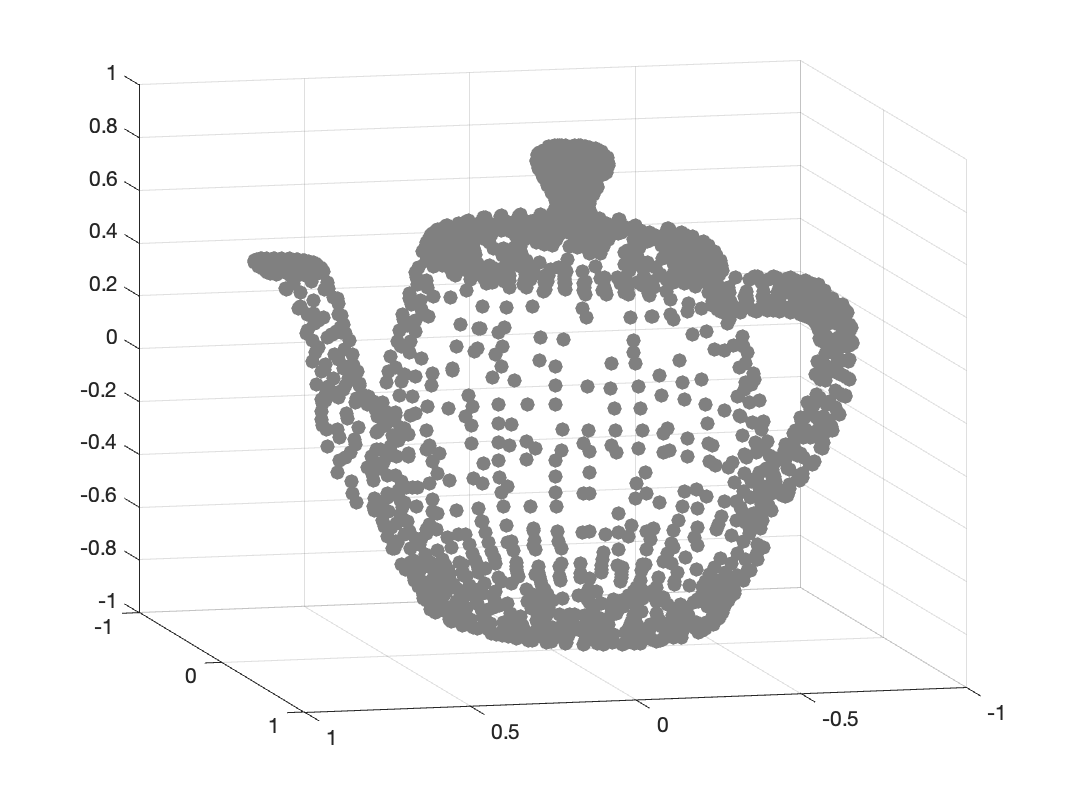}
		} 
		\subfigure{%
			\label{subfig: teapot_poly}
			\includegraphics[width=0.3 \linewidth, trim = {1cm 0cm 1cm 0cm}, clip]{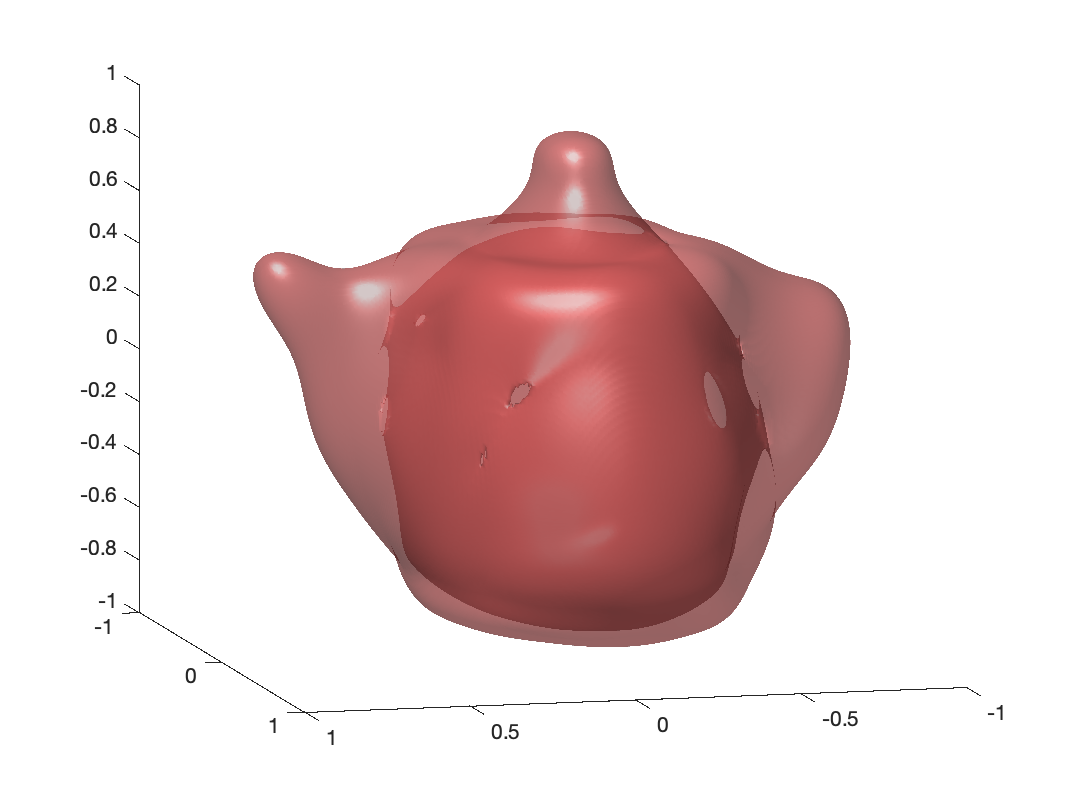}
		}
		\subfigure{%
			\label{subfig:teapot_poly_star}
			\includegraphics[width=0.3 \linewidth, trim = {1cm 0cm 1cm 0cm}, clip]{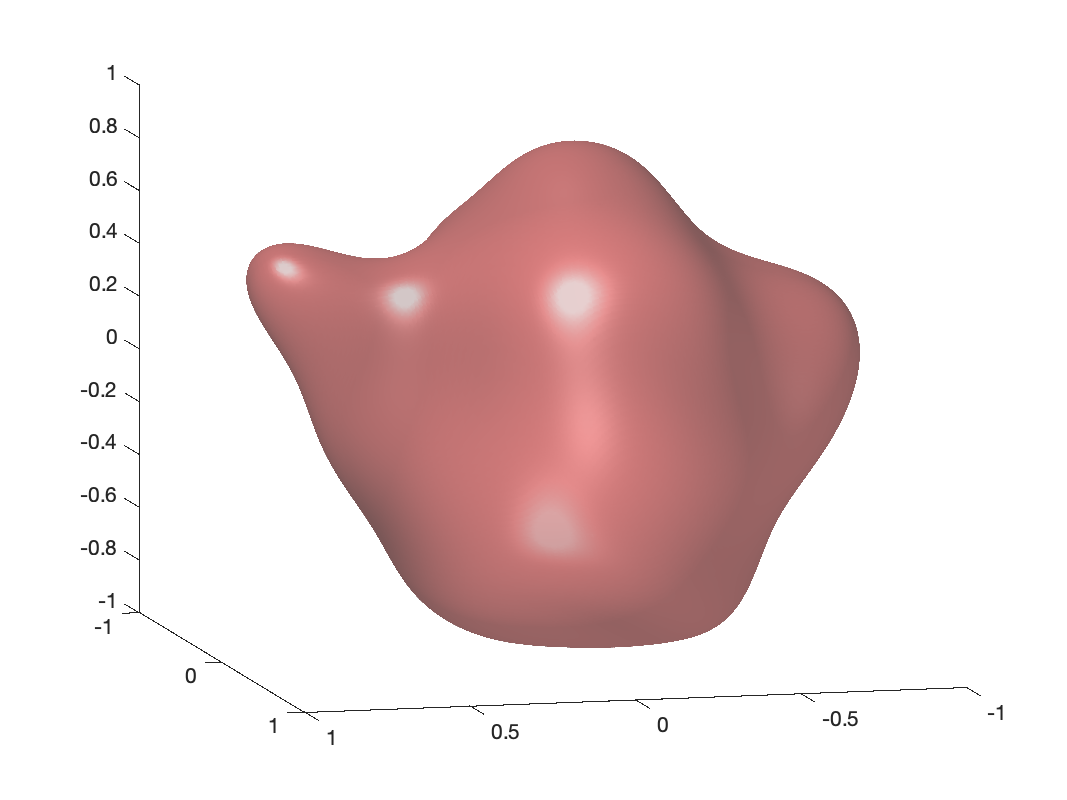}
		}
	}
\end{figure}
\end{ex}

\begin{ex}[Certifying the correctness of several packing configurations of teapots in a torus] \label{ex: cert teapots}
	Let us consider the problem of certifying the correct packing configurations of a torus type container set given as the $0$-sublevel set of $c(x)=(||x||_2^2 + 1)^3 - 10(x_1^2 + x_2^2)(x_3^2 + 1)$. Four objects are found by first solving Opt.~\eqref{opt: SOS discrete points} for $R=2.18$, $\Lambda=[-1.2,1.2]^3$, $d=6$, $\mathcal{S}=\emptyset$ and discrete points, $\{x_i\}$,  given by Matlab's function \texttt{teapotGeometry} (the same as Example~\ref{ex: teapot}) to find a function $J_6^*$. Objects are then given by $P_i=\{x \in R^n:J_6^*(T_i^{-1}x))<0,F(T_i^{-1}x)<0 \}$ where $F(x)=||x||_2^2-1$ and $T_i^{-1}x=3(x-c_i)$. Note, in this example, it is unnecessary to solve Opt.~\eqref{opt: SOS c2} to certify packing correctness since $C$ is described by a single sublevel set. In Fig.~\ref{subfig: config 1} centres of objects are given as follows, $c_1=[1, 1, 0]^\top$, $c_2=[-0,.4, 0, 0]^\top$, $c_3=[-0.5, 0.5, 0]^\top$ and $c_4=[-0.5, -0.5, 0]^\top$. Clearly, in this instance $P_1$ is outside of the container set and $P_2$ overlaps with both $P_3$ and $P_4$. Attempting to solve Opt.~\eqref{opt: SOS c1} for this incorrect object configuration with $i=1$ and $d=8$ results in the SDP solver, Mosek, outputting numerical problems and $\gamma_1^{(1)}= -23.3$. A similar situation occurs when attempting to solve Opt.~\eqref{opt: non_overlap} for $i=2$, $j=3$ and $d=8$, Mosek returns a value of $\gamma_{2,3}^{(3)}=-0.26$. On the other hand if we translate Objects 1 and 2 by making the coordinate changes $x\to x-[-0.5,-0.5,0]^\top$ and $x\to x-[0.9,-0.5,0]^\top$ to each object respectively we get the correct packing configuration displayed in Fig.~\ref{subfig: config 2}. This configuration can be certified as correct by using Theorem~\ref{thm: cert correct packing} and solving Opts~\eqref{opt: SOS c1} and~\eqref{opt: non_overlap} for $d=8$, where the smallest $\gamma$ was found to be $0.15>0$.
\begin{figure}[htbp]
	\floatconts
	{fig:teapot config}
	{ \vspace{-25pt} \caption{  Plot associated with Examples~\ref{ex: cert teapots} and~\ref{ex: cert crisps}. Fig.~\ref{subfig: config 1} shows an incorrect 3D packing configurations in which objects can be translated to a correct packing configuration given in Fig.~\ref{subfig: config 2}.  Fig.~\ref{subfig: crisp incorrect} shows an incorrect 2D packing configurations in which objects can be rotated to a correct packing configuration given in Fig.~\ref{subfig: crisp correct}. \vspace{-15pt} }}
	{%
		\subfigure{%
			\label{subfig: config 1}
			\includegraphics[width=0.239 \linewidth, trim = {1cm 0cm 1cm 0cm}, clip]{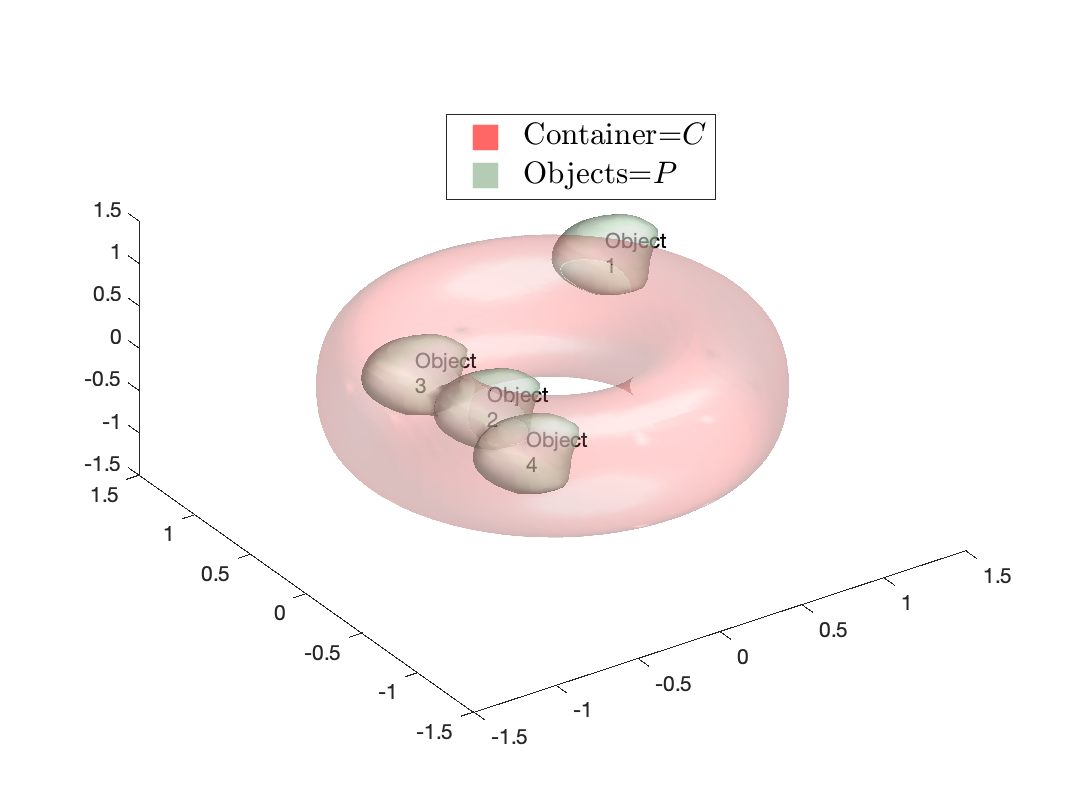}
		} 
			\subfigure{%
		\label{subfig: config 2}
		\includegraphics[width=0.239 \linewidth, trim = {1cm 0cm 1cm 0cm}, clip]{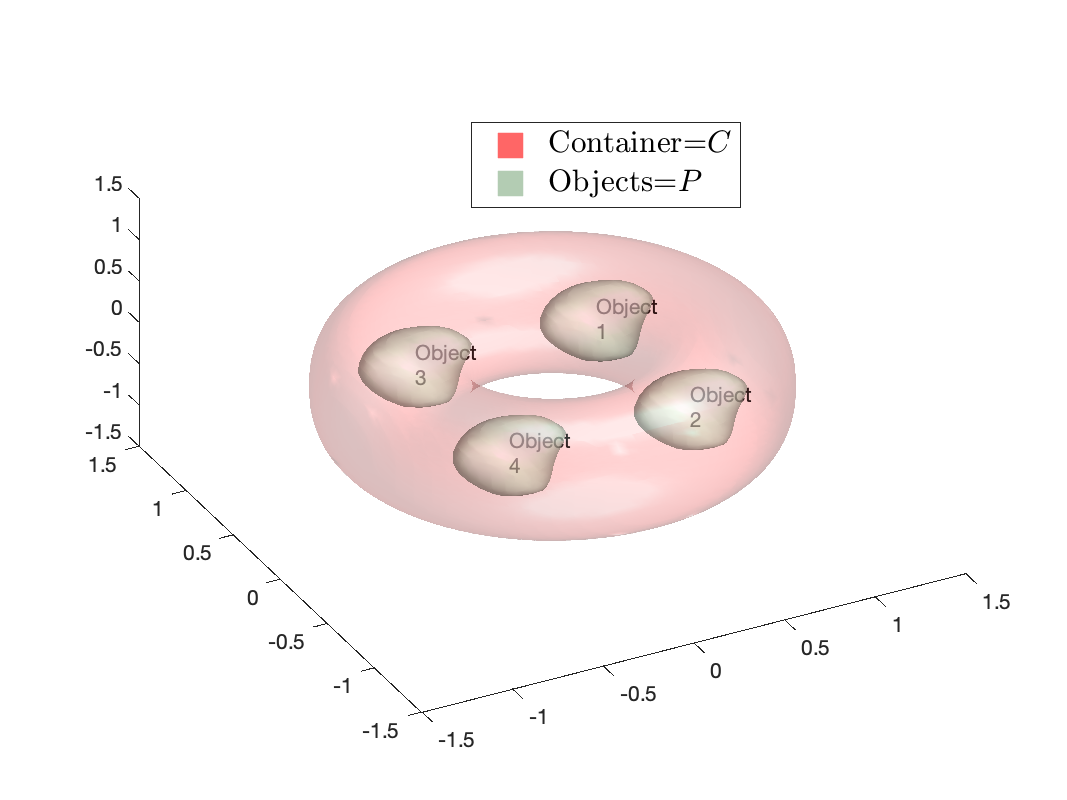}
	} 
			\subfigure{%
	\label{subfig: crisp incorrect}
	\includegraphics[width=0.239 \linewidth, trim = {1cm 0cm 1cm 0cm}, clip]{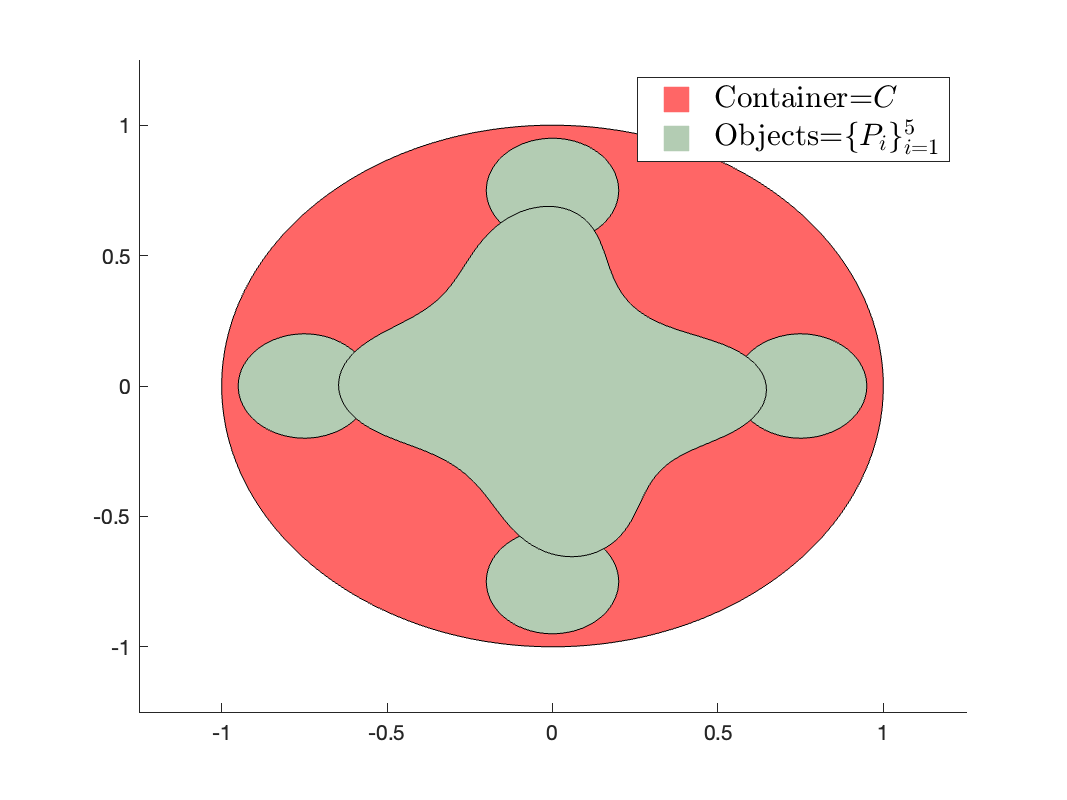}
} 
			\subfigure{%
	\label{subfig: crisp correct}
	\includegraphics[width=0.239 \linewidth, trim = {1cm 0cm 1cm 0cm}, clip]{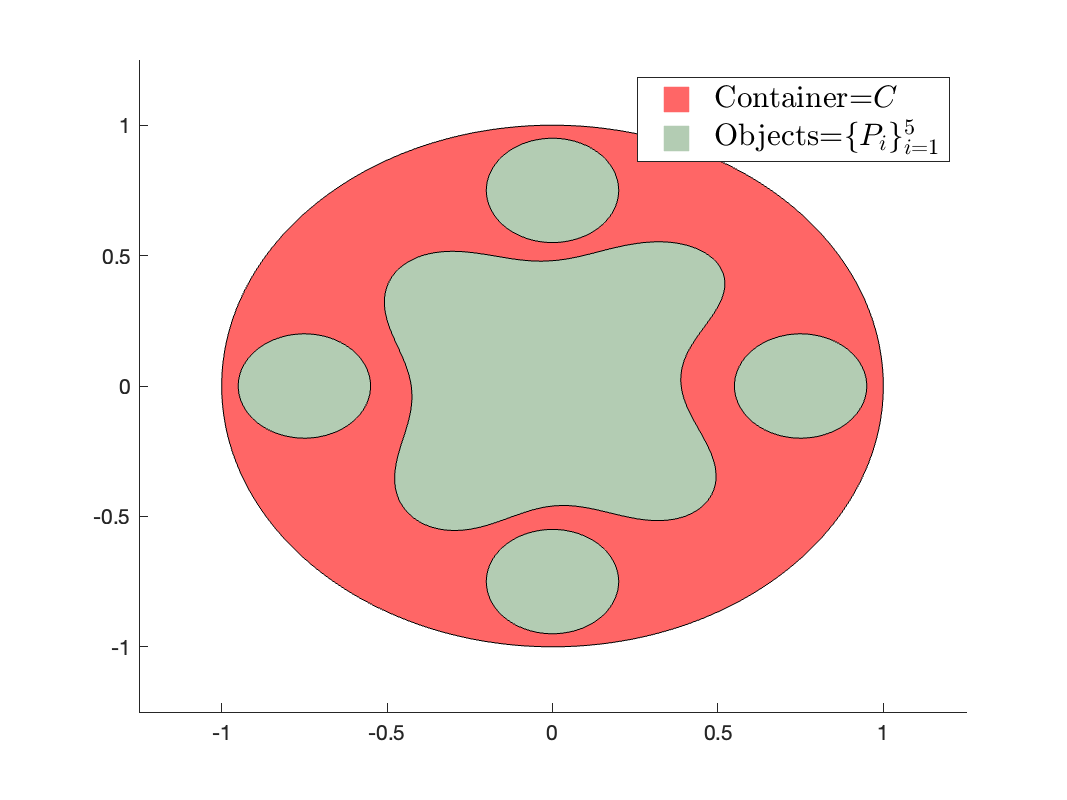}
} 	

}
\end{figure}

%

\end{ex}
\vspace{-0.2cm}
\begin{ex}[Certifying the packing correctness configurations of a crisp packet and circles] \label{ex: cert crisps}
	
	\noindent
	Let us consider the problem of certifying the correct packing configurations of a circular type container set, given by the $0$-sublevel set of $c(x)=||x||_2^2-1$. Five objects are given by $\{x\in \R^2: p_i(x)<0  \} $, where $p_1$ was found by solving Opt.~\eqref{opt: SOS discrete points} for the same crisp point cloud data from Ex.~\ref{ex: learn crisp} with {$d=18$, $r=1.66$, $\Lambda=[-1.1,1.1]^2$ and $\mathcal{S}$ given in Eq.~\eqref{prior knowledge: star}}. The other four objects are given by $p_i(x)=(x_1 \pm 0.75)^2 + x_2^2-0.2^2$ or $p_i(x)= x_1^2+(x_2 \pm 0.75)^2-0.2^2$. Note, in this example it is unnecessary to define a computation domain using $F_i$ as all sublevel sets are simply connected. Therefore to certify packing we set $F_i \equiv 0$ in Opt.~\eqref{opt: SOS c1} and Opt.~\eqref{opt: non_overlap} and no longer need to solve Opt.~\eqref{opt: SOS c2}. The initial packing configuration given in Fig.~\ref{subfig: crisp incorrect} is incorrect. Attempting to solve solve Opt.~\eqref{opt: non_overlap} for this incorrect object configuration with{$i=1$, $j=2$ and $d=22$} results in the SDP solver, Mosek, outputting numerical problems and {$\gamma_{1,2}^{(3)}=-0.03$}. On the other hand, if we rotate $P_1$ using the rotation matrix $R = \begin{bmatrix}
		\cos(\frac{\pi}{4}) & -\sin(\frac{\pi}{4}) \\
		\sin(\frac{\pi}{4}) & \cos(\frac{\pi}{4})
	\end{bmatrix} \in E(2)$ we get the correct packing configuration depicted in Fig.~\ref{subfig: crisp correct}. This configuration can be certified as correct by using Theorem~\ref{thm: cert correct packing} and solving Opts~\eqref{opt: SOS c1} and~\eqref{opt: non_overlap} for {$d=22$}, where the smallest $\gamma$ was found to be {$0.0071>0$.}


%
	\end{ex}
\vspace{-0.2cm}
  \section{Conclusion}
  \vspace{-0.2cm}
Leveraging object point cloud data, we've proposed and implemented a method grounded in convex optimization to learn polynomial sublevel set representations for target objects with known prior shape knowledge constraints. Through these polynomial representations, we have shown that for a sufficiently large degree there exists an SOS program that can certify correct packing configurations. In future work, we aim to integrate our SOS packing certification program into a reinforcement learning framework, autonomously incentivizing agents suggesting correct packing configurations.


\vspace{-0.2cm}
	\section{Appendix}
	\begin{thm}[Putinar's Positivstellesatz \citep{putinar1993positive}] \label{thm: Psatz}
		Consider the semialgebriac set $X = \{x \in \R^n: g_i(x) \ge 0 \text{ for } i=1,...,k\}$. Further suppose $\{x  \in \R^n : g_i(x) \ge 0 \}$ is compact for some $i \in \{1,..,k\}$. If the polynomial $f: \R^n \to \R$ satisfies $f(x)>0$ for all $x \in X$, then there exists SOS polynomials $\{s_i\}_{i \in \{1,..,m\}} \subset \sum_{SOS}$ such that $	f - \sum_{i=1}^m s_ig_i \in \sum_{SOS}.$ 	\end{thm}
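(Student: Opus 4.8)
The statement is the classical theorem of Putinar, and the plan is to give the standard functional-analytic proof via the spectral theorem for commuting self-adjoint operators. Write $\sum_{SOS}$ for the cone of sums of squares and let $M := \{\sigma_0 + \sum_{i=1}^k \sigma_i g_i : \sigma_0,\dots,\sigma_k \in \sum_{SOS}\} \subset \R[x]$ be the quadratic module generated by the $g_i$; the conclusion is precisely that $f \in M$. The first reduction is to extract from the compactness hypothesis the \emph{Archimedean} property of $M$, namely that $N - \norm{x}_2^2 \in M$ for some $N \in \N$. Since $\{x : g_i(x) \ge 0\}$ is compact it is contained in a ball $B_R(0)$, and in the standard setting this lets one place the constraint $R^2 - \norm{x}_2^2$ (whose addition does not alter $X$) inside $M$, yielding the Archimedean condition. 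It is this property, rather than compactness \emph{per se}, that the remainder of the argument uses.

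With Archimedeanity in hand I would argue by contradiction: suppose $f \notin M$. The set $M$ is a convex cone containing $1$, and the Archimedean property makes it algebraically bounded (for every $p$ there is $N$ with $N \pm p \in M$), which is enough to separate the point $f$ from $M$ by the Hahn--Banach theorem. This produces a linear functional $L : \R[x] \to \R$ with $L(m) \ge 0$ for all $m \in M$, with $L(f) \le 0$, and with the normalization $L(1) = 1$. By construction $L$ is a \emph{state}: it is nonnegative on every square and on every product $\sigma g_i$ with $\sigma \in \sum_{SOS}$.

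Next I would run the GNS/moment construction. The bilinear form $\langle p, q \rangle := L(pq)$ is symmetric and positive semidefinite, so quotienting $\R[x]$ by its kernel and completing yields a Hilbert space $H$ on which the multiplication maps $X_1,\dots,X_n$ (sending $p \mapsto x_j p$) descend to symmetric operators. The Archimedean bound $N - \norm{x}_2^2 \in M$ gives $\sum_j \langle X_j p, X_j p\rangle \le N \langle p, p\rangle$, so each $X_j$ extends to a \emph{bounded} self-adjoint operator, and the $X_j$ commute pairwise. The spectral theorem for a tuple of commuting bounded self-adjoint operators then produces a compactly supported projection-valued measure, and pairing it against the cyclic vector $\mathbf{1}$ yields a genuine probability measure $\mu$ on $\R^n$ with $L(p) = \int p \, d\mu$ for all $p \in \R[x]$.

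It remains to locate $\mathrm{supp}(\mu)$ inside $X$ and conclude. For each $i$, the nonnegativity $L(\sigma g_i) = \int \sigma g_i \, d\mu \ge 0$ for all $\sigma \in \sum_{SOS}$ forces $g_i \ge 0$ $\mu$-almost everywhere (otherwise one constructs an SOS weight concentrating mass where $g_i < 0$ to violate the inequality), so $\mathrm{supp}(\mu) \subseteq \{g_i \ge 0\}$ for every $i$ and hence $\mathrm{supp}(\mu) \subseteq X$. Then $L(f) = \int f \, d\mu > 0$, because $f > 0$ on $X \supseteq \mathrm{supp}(\mu)$ and $\mu$ is a probability measure, contradicting $L(f) \le 0$; therefore $f \in M$, which is the claim. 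I expect the operator-theoretic heart of the argument to be the main obstacle — deriving boundedness and self-adjointness of the $X_j$ from the Archimedean condition and invoking the spectral theorem to recover $\mu$, together with the support localization — and this is exactly where Archimedeanity is indispensable, since compactness of $X$ alone does not suffice.
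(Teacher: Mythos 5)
The paper never proves this statement --- it is imported verbatim from \cite{putinar1993positive} as a black-box tool (used to justify the shape constraints after Eq.~\eqref{prior knowledge: star} and inside the proof of Theorem~\ref{thm: cert correct packing}) --- so there is no internal proof to compare against. What you have written is the standard functional-analytic proof from the literature, and in outline it is correct: Hahn--Banach/Eidelheit separation of $f$ from the quadratic module $M=\sum_{SOS}+\sum_{SOS}g_1+\dots+\sum_{SOS}g_k$ (using that Archimedeanity makes $1$ an order unit, hence an internal point of $M$), the GNS construction, boundedness of the multiplication operators from $N-\norm{x}_2^2\in M$, the spectral theorem yielding a representing measure $\mu$, support localization $\mathrm{supp}(\mu)\subseteq X$, and the contradiction $0\ge L(f)=\int f\,d\mu>0$. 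Your closing remark is also correct and important: it is Archimedeanity, not compactness of $X$, that drives the argument (Jacobi--Prestel give compact $X$ with non-Archimedean $M$).

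The one place where your sketch glosses a genuinely nontrivial step is the very first reduction. The hypothesis is that a single constraint set $\{x:g_i(x)\ge 0\}$ is compact, and you need $N-\norm{x}_2^2\in M$ for the module $M$ generated by the \emph{given} $g_i$'s. Your phrasing --- ``place the constraint $R^2-\norm{x}_2^2$ (whose addition does not alter $X$) inside $M$'' --- reads like adjoining the ball polynomial as an extra generator; that indeed leaves $X$ unchanged, but it \emph{enlarges} $M$, and would only prove the weaker conclusion that $f$ lies in the enlarged module, not the theorem as stated. The correct route is Putinar's lemma: since $\{g_i\ge 0\}$ is compact, choose $N$ with $N-\norm{x}_2^2>0$ on it, and apply Schm\"udgen's Positivstellensatz to the \emph{single} constraint $g_i$ --- for one generator the preordering $\sum_{SOS}+\sum_{SOS}g_i$ coincides with the quadratic module --- to conclude $N-\norm{x}_2^2\in\sum_{SOS}+\sum_{SOS}g_i\subseteq M$ (Putinar's original paper gives an alternative operator-theoretic derivation). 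This is a theorem in its own right, not bookkeeping, and it is precisely why the hypothesis is phrased as compactness of one $\{g_i\ge 0\}$ rather than of $X$. Two smaller points a full write-up would need: after separation you must check $L(1)>0$ before normalizing (if $L(1)=0$, the Archimedean bounds $N\pm p\in M$ force $L\equiv 0$), and Cauchy--Schwarz is needed to see that the kernel of the form $\langle p,q\rangle=L(pq)$ is invariant under the multiplication operators so that they descend to the quotient.
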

%

	\begin{lem}[Compact sets can be approximated arbitrarily well using polynomial sublevel sets] \label{lem: any compact set can be approx by poly}
Suppose $X \subset B_r(0)$ for some $r>0$. Then for any $\eps>0$ there exists $J \in \R[x]$ such that $D_V(X,\{x\in B_r(0):J(x) \le 0 \} )<\eps$, where $D_V(A,B):=\mu\bigg( (A/B) \cup (B/A) \bigg)$ is the volume metric and $\mu$ is the Lebesgue measure. 
	\end{lem}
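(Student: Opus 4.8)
The plan is to reduce this geometric approximation problem to an $L^1$ approximation of the indicator function $\mathds{1}_X$ by a polynomial, and then recover control of the symmetric difference through a Chebyshev-type inequality. Since $X$ is compact it is Lebesgue measurable, and being contained in the bounded set $B_r(0)$ its indicator $\mathds{1}_X$ lies in $L^1(B_r(0))$. First I would recall that polynomials are dense in $L^1(B_r(0))$: continuous functions on the compact closure $\overline{B_r(0)}$ are dense in $L^1$, and by the Weierstrass approximation theorem every such continuous function is a uniform, hence (on a finite-measure domain) an $L^1$, limit of polynomials. Consequently, for the given $\eps>0$ there exists $q \in \R[x]$ with $\int_{B_r(0)} |\mathds{1}_X(x) - q(x)|\,dx < \eps/4$.

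Next I would set $J(x) := \tfrac{1}{2} - q(x)$, so that the candidate sublevel set is $S := \{x \in B_r(0) : J(x) \le 0\} = \{x \in B_r(0) : q(x) \ge \tfrac{1}{2}\}$. The key step is to bound $D_V(X,S) = \mu\big((X \setminus S) \cup (S \setminus X)\big)$ by the $L^1$ error. On $X \setminus S$ we have $\mathds{1}_X = 1$ and $q < \tfrac{1}{2}$, so $|\mathds{1}_X - q| > \tfrac{1}{2}$ there; likewise on $S \setminus X$ we have $\mathds{1}_X = 0$ and $q \ge \tfrac{1}{2}$, so $|\mathds{1}_X - q| \ge \tfrac{1}{2}$. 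Integrating the pointwise bound $|\mathds{1}_X - q| \ge \tfrac{1}{2}$ over the symmetric difference then yields $\tfrac{1}{2}\,\mu\big((X \setminus S) \cup (S \setminus X)\big) \le \int_{B_r(0)} |\mathds{1}_X - q|\,dx < \eps/4$, hence $D_V(X,S) < \eps$, which is exactly the claim.

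Because the construction is short, I do not expect a serious obstacle; the one point that needs care is the passage from an $L^1$ integral bound to a bound on the measure of a pointwise-defined region, namely the symmetric difference. This is precisely the Chebyshev/Markov step above, and it works because the integrand $|\mathds{1}_X - q|$ is bounded below by the fixed constant $\tfrac{1}{2}$ on the entire symmetric difference, irrespective of how irregular the topological boundary of $X$ is. In particular, this approach never needs to resolve $X$ pointwise near its boundary (which could even have positive measure), so no separation or Urysohn-type argument is required; all of the difficulty is absorbed into the single $L^1$ polynomial approximation guaranteed by Weierstrass.
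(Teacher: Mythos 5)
Your proof is correct, but it takes a genuinely different route from the paper's. The paper first represents the compact set exactly as a sublevel set of a smooth function (via Thm.~2.29 of Lee's \emph{Smooth Manifolds}), approximates that function uniformly from below using Weierstrass, and then invokes Proposition~1 of the earlier work \cite{jones2023sublevel} to pass from one-sided uniform function approximation to convergence of sublevel sets in the volume metric. You instead approximate the indicator $\mathds{1}_X$ in $L^1(B_r(0))$ by a polynomial $q$, threshold at $\tfrac12$, and observe that $|\mathds{1}_X-q|\ge\tfrac12$ pointwise on the symmetric difference, so a Chebyshev/Markov bound gives $D_V(X,S)\le 2\|\mathds{1}_X-q\|_{L^1}<\eps$. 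Your argument is more elementary and self-contained (no smooth Urysohn-type representation, no appeal to an external proposition) and is also more general: it only needs $X$ measurable with finite measure, not compact. What the paper's construction buys in exchange is a one-sided (outer) approximation, $X\subseteq\{J\le 0\}$, which matters for the way sublevel sets are used elsewhere in the paper (the set must contain the object/point cloud); your thresholded set $\{q\ge\tfrac12\}$ carries no such containment guarantee, though none is required by the lemma as stated. Both proofs establish the claim.
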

\begin{proof}
By Thm.~2.29 in~\cite{lee2012smooth}, for any compact set $X \subset \R^n$ there exists a smooth function $V$ such that $X = \{x \in \R^n: V(x)  \le 0\}=\{x \in B_r(0): V(x) \le 0\}$. By the Weierstrass approximation theorem, for any $\delta>0$ there exists $J_\delta \in \R[x]$ such that $|V(x)-J_\delta(x)|< \delta$ for all $x \in B_r(0)$. Let $\tilde{J}_\delta(x)=J_\delta(x)-\delta$ then $\tilde{J}_\delta(x)<V(x)$ and $|V(x)-\tilde{J}_\delta(x)|< 2\delta$. We have shown that there exists a sequence of polynomials that approximates $V$ from bellow. Hence by Proposition~1 from~\cite{jones2023sublevel} it follows that $\lim_{\delta \to 0} D_V(X,\{x\in B_r(0):J_\delta (x) \le 0 \} )$.
	\end{proof}
	
	
	\bibliography{bib.bib}
\end{document}